\newtheorem{Theo}{Theorem}[section]
\newtheorem{Def}[Theo]{Definition}
\newtheorem{Ex}[Theo]{Example}
\newtheorem{Lem}[Theo]{Lemma}
\newtheorem{Prop}[Theo]{Proposition}
\newtheorem{Cor}[Theo]{Corollary}
\newtheorem{Rem}[Theo]{Remark}
\newtheorem{Not}[Theo]{Notation}
\newcommand{\xymat}[1]{\begin{align*}\xymatrix{ #1}\end{align*}}
\title[Unstable $\nu_1$-periodic homotopy of $H$-spaces]{Unstable $\nu_1$-periodic homotopy of simply connected, finite $H$-spaces, using Goodwillie calculus}
\author{Jens Jakob Kjaer}
\date{\today}
\begin{document}

\thanks{The author thanks Mark Behrens for much help, as well as many personal conversations, especially the construction leading to Definition \ref{DefTheta} was formalized with a lot of help from this source. The author was partially supported by NSF DMS 1209387}
\subjclass[2000]{19L20, 55N15, 55Q51, 55Q99}
 \keywords{ Adams Operations, Goodwillie Calculus, Unstable $\nu_h$-periodic homotopy groups}

\begin{abstract}
In this paper we recover Bousfield's computation of $\nu_1$-periodic homotopy groups of simply connected, finite $H$-spaces from \cite{Bou99} using the techniques of Goodwillie calculus. This is done through first computing Andr\'{e}-Quillen cohomology over the monad  $\mathbb{T}$ that encodes the power operations of complex $K$-theory. Then lifting this computation to computing $K$-theory of topological Andr\'{e}-Quillen cohomology, and then using results of Behrens and Rezk relating it back to the Bousfield-Kuhn functor.

The fact that we recovers the result of Bousfield allows us to conclude $\nu_1$-periodic Goodwillie tower for simply connected, finite $H$-spaces converges.
\end{abstract}

\maketitle
\tableofcontents

\section{Introduction}
Famous results of Quillen \cite{Qui69} and Sullivan  \cite{Sul77} tell us that we can model the homotopy theory of $1$-connected rational spaces as either differential graded Lie algebras, or differential graded commutative algebras. We can translate between the two models when $X$ is a $1$-connected finite space, by
\begin{align*}
\pi_*(X)\otimes \mathbb{Q}\simeq AQ^*(\Lambda_X)
\end{align*}
where $AQ^*$ is Andr\'{e}-Quillen cohomology, and $\Lambda_X$ is a strictly commuting model for $C^*(X;\mathbb{Q})$. A natural question is to the extend this to more general spaces. It is a result of Mandell \cite{Man01} that the analogous statement fails for $p$-completed spaces.

In stable homotopy theory rational localization fits into chromatic homotopy theory as the height zero case. The unstable picture is a bit more complicated, but we do have the notion of chromatic localization of the category of spaces. In work to appear, Heuts \cite{Heu18} showed that $\nu_h$-periodic spaces are equivalent to $\nu_h$-periodic Lie algebras of spectra. In the same article it is shown that $\nu_h$-periodic commutative algebras of spectra fail to model $\nu_h$-periodic spaces. The failure comes down to the failure of the $\nu_h$-periodic Goodwillie spectral sequence to converge.

If the $\nu_h$-periodic Goodwillie spectral sequence converges for a space $X$, we follow \cite{BeRe17Survey}, and say that the space is $\Phi_h$-good. Here $\Phi_h$ is the Bousfield-Kuhn functor at height $h$ (see \cite{Kuh08} for an overview). In \cite{Kuh07} Kuhn interpreted the computation of Arone and Mahowald from  \cite{AM99} to say that the spheres are $\Phi_h$-good for all $h$.

Behrens and Rezk \cite{BeRe17} showed that the convergence of the Goodwillie tower is equivalent statement of
\begin{align*}
\Phi_h(X) \to TAQ(S_{K(h)}^X)
\end{align*}
being an equivalence, where $K(h)$ is the height $h$ Morava $K$-theory spectrum, and $S^X$ is the Spanier-Whitehead dual of $X$ as a ring spectrum. In \cite{BeRe17Survey} the authors further gave a class of spaces that are $\Phi_h$-good for all $h$.

In this current paper the author will focus on height $h=1$, at an odd prime $p$. The $\nu_1$-periodic homotopy groups of many spaces were computed by a number of authors using various methods (see \cite{Dav95} for an overview). Here we will focus on recovering the result of Bousfield in \cite{Bou99} concerning the computation of $\nu_1^{-1}\pi_*$ for $1$-connected $H$-spaces, using Goodwillie calculus, and as a result we will conclude that all $1$-connected $H$-spaces are $\Phi_1$-good. Our computation of differentials in the $\nu_1$-periodic Goodwillie spectral sequence enhances our understanding of the unlocalized Goodwillie spectral sequence of these spaces.

The structure of the paper is as follows: Section 2 contains the definitions of Andr\'{e}-Quillen cohomology both over algebras and monads, and some filtrations giving rise to spectral sequences. Section 3 recalls the definition of the monad $\mathbb{T}$, and its connection to the commutative operad. At the end of section 3, we will specialize to the height 1 case. Section 4 proves certain differentials in the $\mathbb{T}$ spectral sequence. Section 5 lifts the differentials from the $\mathbb{T}$ spectral sequence to the $TAQ$ spectral sequence. Finally, section 6 puts these pieces together and recovers a theorem of Bousfield, and concludes that the class of spaces it pertains to are all $\Phi_1$-good.

\section{Background material}
Throughout the paper, all spectra are completed at an odd prime $p$.
We will need a good symmetric monoidal category of spectra, so take spectra to mean the category of symmetric spectra as developed in \cite{HSS00}. When $E$ is a commutative ring spectrum, then let $E-mod$ denotes the category of $E$-modules. 
\subsection{Operad and monad cohomology, and some spectral sequences}
For the purpose of this paper an operad is symmetric and reduced. So for an operad $\mathcal{O}$ in a symmetric monoidal category $\mathcal{C}$, we have $\mathcal{O}(n)$ is a $\Sigma_n$-object, $\mathcal{O}(0)=*$, and $\mathcal{O}(1)$ is the monoidal unit.
\begin{Def}
If $\mathcal{O}$ is an operad in $E-mod$ for a commutative ring spectrum $E$, then there is a functor 
$\mathcal{F}_\mathcal{O}: E-mod\to E-mod$ 
given by 
\begin{align*}
\mathcal{F}_\mathcal{O}(A)= \bigvee_i(\mathcal{O}(i)\wedge_E A^{\wedge_E i})_{h\Sigma_i}.
\end{align*}
We call $\mathcal{F}_\mathcal{O}$ the monad associated to $\mathcal{O}$, and $\mathcal{F}_\mathcal{O}(A)$ the free $\mathcal{O}$-algebra on $A$. 
\end{Def}
Note the fact that $\mathcal{O}$ is assumed reduced means that $\mathcal{F}_\mathcal{O}$ is augmented as a monad.

\begin{Def}
If $\mathcal{O}$ is an operad in $E-mod$ for a commutative ring spectrum $E$, $A$ is an  $\mathcal{O}$-algebra, let $(\ )^\vee=F(\ ,E)$ be the Spanier-Whitehead dual, and $\mathcal{F}_\mathcal{O}$ is the monad associated to $\mathcal{O}$, then 
\begin{align*}
TAQ^*_\mathcal{O}(A):=\pi_*B(1,\mathcal{F}_\mathcal{O},A)^\vee
\end{align*}
is the $\mathcal{O}$ cohomology of $A$, or the topological Andr\'{e}-Quillen cohomology of $A$. 
\end{Def}
For the original definitions and a more in depth discussion see \cite{Bas99}, \cite{BaMa11}, and \cite{Har10}.

In the category of spectra we can use the grading of $\mathcal{F}_\mathcal{O}$ by arity for an operad $\mathcal{O}$ to compute topological Andr\'{e}-Quillen cohomology of an $\mathcal{O}$-algebra.
\begin{Def}
We define inductively
\begin{align*}
\mathcal{F}_\mathcal{O}[k](A)&=  \big(\mathcal{O}(k)\wedge A^{\wedge k} \big)_{h\Sigma_k}\\
\mathcal{F}_\mathcal{O}^{\circ q}[k](A)&= \left(\bigvee_{k= n_1+\ldots +n_l}\mathcal{O}(l) \wedge \left( \mathcal{F}_\mathcal{O}^{\circ (q-1)}[ n_1](X) \wedge \ldots \wedge  \mathcal{F}_\mathcal{O}^{\circ (q-1)}[ n_l](X)\right) \right)_{h\Sigma_{k}}
\end{align*}
\end{Def}
This induces a filtration of the monadic bar construction, which we denote 
\begin{align*}
B(1,\mathcal{F}_\mathcal{O},A)[\leq k].
\end{align*}
Let $B(1,\mathcal{F}_\mathcal{O},A)[= k]$ denote the associated quotients.
\begin{Rem}
If $\mathcal{O}$ is an operad in spectra, and $A$ is an $\mathcal{O}$ algebra, then there is a spectral sequence:
\begin{align*}
\bigoplus_k \pi_* B(1,\mathcal{F}_\mathcal{O},A)[=k]^\vee \Rightarrow TAQ^*_\mathcal{O}(A)
\end{align*}
Natural in both $A$, and $\mathcal{O}$. We will call this spectral sequence the $TAQ_\mathcal{O}$ spectral sequence.
\end{Rem}

\begin{Def}
If $\mathcal{O}$ is an operad of spectra then define 
\begin{align*}
\mathcal{F}_\mathcal{O}^{dp}(X)=\bigvee (\mathcal{O}(i)\wedge X^{\wedge i})^{h\Sigma_i}.
\end{align*}
\end{Def}
The $dp$ stands for divided powers. If we replaced homotopy fixed points with actual fixed points we get a monad (alternatively one could work in an infinity category, and get a homotopy coherent monad), which in the algebraic setting for the commutative operad encodes divided powers \cite{Fre00}.

Recall that if $\mathcal{O}$ is a reduced operad, then it is an augmented monoid in the category of symmetric sequences with respect to the composition product, see for example \cite{MSS02}. Thus we can form the operadic bar construction, which is again a symmetric sequence $B(\mathcal{O})$. $B(\mathcal{O})(n)$ can be constructed as the space of rooted trees with $n$ leaves, with lengths of edges, and labels on the internal vertices coming from $\mathcal{O}$, see \cite{Chi05}.

\begin{Def}
If $A\in \mathcal{C}$ and $\mathbb{M}:\mathcal{C}\to \mathcal{C}$  an augmented monad, with augmentation $\epsilon: \mathbb{M}\to 1$. Then define $\overline{A}$ is the $\mathbb{M}$-algebra with the trivial action, i.e., the algebra on $A$ with structure map $\mathbb{M}(A)\stackrel{\epsilon(A)}{\to}1(A)=A$. 
\end{Def}

 Note that $B(1,\mathcal{F}_\mathcal{O},\overline{A})\simeq \mathcal{F}_{B(\mathcal{O})}(A)$. In \cite{Chi05} Ching showed that for an operad in spaces or spectra, the bar construction $B(\mathcal{O})$ is an cooperad, and its Spanier-Whitehead dual is therefore an operad. This operad is called the Koszul dual of $\mathcal{O}$.
\begin{Def}
If $\mathcal{O}$ is an operad of spectra, we define $\mathcal{KO}$ to be its Koszul dual operad in the sense of \cite{Chi05}.
\end{Def}

From  \cite{Fre04} we know that a similar story can be told for operads of chain complexes over a ring. By abuse of notation we give the following definition.
\begin{Def}
If $\mathcal{O}$ is an operad of chain complexes, we define $\mathcal{KO}$ to be its Koszul dual operad in the sense of \cite{Fre04}.
\end{Def}

\begin{Cor}
The $TAQ_\mathcal{O}$-spectral sequence has $E_1$-page $\pi_*\mathcal{F}_{\mathcal{KO}}^{dp}(A^\vee)$, if $A$ is a $\mathcal{O}$-algebra, which is finite as a spectrum.
\end{Cor}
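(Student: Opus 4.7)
The plan is to identify each summand $\pi_* B(1,\mathcal{F}_\mathcal{O},A)[=k]^\vee$ of the $E_1$-page with $\pi_*(\mathcal{KO}(k)\wedge (A^\vee)^{\wedge k})^{h\Sigma_k}$; summing over $k$ then yields $\pi_*\mathcal{F}^{dp}_{\mathcal{KO}}(A^\vee)$ by the definition of $\mathcal{F}^{dp}_{\mathcal{KO}}$.

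The first step is to analyze the associated graded $B(1,\mathcal{F}_\mathcal{O},A)[=k]$ of the arity filtration. In simplicial degree $q$ the face maps of the monadic bar construction come in three flavors: the operadic composition maps (which preserve total arity), the augmentation $\mathcal{F}_\mathcal{O}\to 1$ (which also preserves arity), and the algebra action $\mathcal{F}_\mathcal{O}(A)\to A$ (which strictly decreases arity by combining several inputs into one). Only the last of these interacts nontrivially with the filtration, so on the associated graded it is forced to zero. The effect is to replace the $\mathcal{O}$-algebra $A$ by the trivial algebra $\overline{A}$, giving
\begin{align*}
B(1,\mathcal{F}_\mathcal{O},A)[=k] \simeq B(1,\mathcal{F}_\mathcal{O},\overline{A})[=k] \simeq \bigl(B(\mathcal{O})(k)\wedge A^{\wedge k}\bigr)_{h\Sigma_k},
\end{align*}
where the second equivalence is the identification $B(1,\mathcal{F}_\mathcal{O},\overline{A})\simeq \mathcal{F}_{B(\mathcal{O})}(A)$ noted before the definition of $\mathcal{KO}$.

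The second step is Spanier-Whitehead duality. Since homotopy orbits are a colimit, dualizing converts them into homotopy fixed points:
\begin{align*}
\bigl((B(\mathcal{O})(k)\wedge A^{\wedge k})_{h\Sigma_k}\bigr)^\vee \simeq F(B(\mathcal{O})(k)\wedge A^{\wedge k},E)^{h\Sigma_k}.
\end{align*}
Here is where the finiteness of $A$ enters: $A^{\wedge k}$ is a finite, hence dualizable, $E$-module, so $F(A^{\wedge k},E)\simeq (A^\vee)^{\wedge k}$, and smashing with the dualizable object $(A^\vee)^{\wedge k}$ commutes with the internal hom in the second variable. This gives
\begin{align*}
F(B(\mathcal{O})(k)\wedge A^{\wedge k},E) \simeq F(B(\mathcal{O})(k),(A^\vee)^{\wedge k}) \simeq B(\mathcal{O})(k)^\vee\wedge (A^\vee)^{\wedge k} = \mathcal{KO}(k)\wedge (A^\vee)^{\wedge k},
\end{align*}
using the definition $\mathcal{KO}(k)=B(\mathcal{O})(k)^\vee$. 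Taking $\Sigma_k$-homotopy fixed points and summing over $k$ yields the claimed $E_1$-page.

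The main obstacle is the first step: verifying that the arity filtration of $B_\bullet(1,\mathcal{F}_\mathcal{O},A)$ is preserved by all of the simplicial structure and that its associated graded is computed by substituting the trivial algebra $\overline{A}$ for $A$. A careful bookkeeping of the definitions of $\mathcal{F}_\mathcal{O}^{\circ q}[k]$ given above is needed to confirm that only the algebra-action face maps break the filtration. The duality argument is comparatively routine, though one must be careful to keep track of the fact that $B(\mathcal{O})(k)$ is not itself assumed finite, so one cannot simply move the dual across $B(\mathcal{O})(k)$; dualizability of $(A^\vee)^{\wedge k}$ (which rests on $A$ being a finite spectrum) is what makes the identification go through.
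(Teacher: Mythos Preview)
Your proposal is correct and follows exactly the route the paper sets up: the paper states this corollary without proof, having already recorded the identification $B(1,\mathcal{F}_\mathcal{O},\overline{A})\simeq \mathcal{F}_{B(\mathcal{O})}(A)$ and the definition $\mathcal{KO}=B(\mathcal{O})^\vee$, so your two steps (associated graded replaces $A$ by $\overline{A}$; Spanier--Whitehead duality trades orbits for fixed points and $B(\mathcal{O})$ for $\mathcal{KO}$) are precisely the intended argument. One minor sharpening: your description of the face maps should note that the algebra-action face $d_q$ does preserve the filtration on the arity-$1$ summand (where it is the unit isomorphism) and strictly decreases arity only on $\mathcal{F}_\mathcal{O}[k](A)$ for $k\geq 2$; this is exactly what forces the action to become trivial on the associated graded, as you say.
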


\begin{Def}
If $R$ is a ring, $\mathbb{M}:R-Mod \to R-Mod$ is an augmented monad, and $A$ is a $\mathbb{M}$-algebra then 
\begin{align*}
AQ_\mathbb{M}^*(A):=H^*\mathrm{Hom}_{R-Mod}(B(1,\mathbb{M},A),R)
\end{align*}
is the $\mathbb{M}$-cohomology of $A$, or the Andr\'{e}-Quillen cohomology of $A$.

Similarly we can define the associated homology theory 
\begin{align*}
AQ^\mathbb{M}_*(A):=H_*B(1,\mathbb{M},A).
\end{align*}
\end{Def}

Let $\mathcal{C}$ be a symmetric monoidal category with monoidal product
$\otimes$ and unit $\mathbbm{1}$, and suppose also that $\mathcal{C}$ admits finite coproducts (denoted $\oplus$, with
initial object $0$), and that $\otimes$ distributes over coproducts. For convenience, we also assume that inclusions of direct summands are always monomorphisms in $\mathcal{C}$. From \cite{Rez12} we have the following definition.
\begin{Def}
 By an exponential monad, we mean a monad $\mathbb{M}: \mathcal{C}\to \mathcal{C}$ equipped with natural isomorphisms
\begin{align*}
\nu: \mathbbm{1} \to \mathbb{M}(0),\ \ \  & \zeta: \mathbb{M}(X)\otimes \mathbb{M}(Y) \to \mathbb{M}(X\oplus Y) 
\end{align*}
where $\zeta$ is a natural transformation of functors $\mathcal{C}\times \mathcal{C}\to \mathcal{C}$, with the property that $(\nu,\zeta)$ makes $\mathbb{M}: \mathcal{C}^\oplus\to \mathcal{C}^\otimes $ into a strong symmetric monoidal functor. Furthermore, we
require that every $\mathbb{M}$-algebra, $A$, is naturally a commutative monoid object, with unit
\begin{align*}
\mathbbm{1}\stackrel{\nu}{\to} \mathbb{M}(0) \stackrel{\mathbb{M}(0)}{\to} \mathbb{M}(A) \to A ,
\end{align*}
and multiplication
\begin{align*}
A\otimes A \stackrel{}{\to} \mathbb{M}(A)\otimes \mathbb{M}(A)\stackrel{\zeta}{\to} \mathbb{M}(A\oplus A) \stackrel{\mathbb{M}(\nabla)}{\to} \mathbb{M}(A) \to A 
\end{align*}
\end{Def}
The canonical example of an exponential monad is the free commutative algebra monad
on the category of abelian groups.

\begin{Def}
An exponential monad $\mathbb{M}: \mathcal{C}\to \mathcal{C}$ is called a graded exponential monad if there are functors $\mathbb{M}[k]:\mathcal{C}\to \mathcal{C}$ such that $\mathbb{M}\simeq \bigoplus_k \mathbb{M}[k]$, further there are natural transformations $\mathbb{M}[k]\circ \mathbb{M}[l]\to \mathbb{M}[kl]$ such that the diagram
\xymat{\mathbb{M}[k]\circ \mathbb{M}[l] \ar[r] \ar[d] &\mathbb{M}[kl]\ar[d] \\ \mathbb{M}\circ \mathbb{M} \ar[r] &\mathbb{M}}
commutes for all $k,l$, and the unit $id_{\mathcal{C}}\to \mathbb{M}$ factors as 
\begin{align*}
id_{\mathcal{C}}\to \mathbb{M}[1] \to  \mathbb{M}
\end{align*}
and the augmentation $\mathbb{M}\to id_{\mathcal{C}}$ factors as
\begin{align*}
\mathbb{M} \to \mathbb{M}[1] \to id_{\mathcal{C}}.
\end{align*}
Further there are structure maps $\mathbb{M}[k](X)\otimes \mathbb{M}[l](Y) \to \mathbb{M}[k+l](X\oplus Y)$, such that the following diagram
\xymat{\mathbb{M}[k](X)\otimes \mathbb{M}[l](Y) \ar[r] \ar[d]& \mathbb{M}[k+l](X\oplus Y )\ar[d]\\
\mathbb{M}(X)\otimes \mathbb{M}(Y) \ar[r]^\zeta & \mathbb{M}(X\oplus Y )}
commutes. Further the map $\nu$ should factor as 
\begin{align*}
\mathbbm{1}\to \mathbb{M}[0](0) \to \mathbb{M}(0).
\end{align*}
\end{Def}

\begin{Ex}
If $\mathcal{O}$ is a model for the $\mathbb{E}_\infty$ operad in $E-mod$ for a commutative ring spectrum $E$, then the associated monad $\mathcal{F}_{\mathcal{O}}(A)$, is a graded exponential monad when viewed as a monad on the homotopy category, with the grading given by the arity of the operad.
\end{Ex}

If $\mathbb{M}$ is graded exponential, then this induces a filtration on compositons of $\mathbb{M}$ with itself, given inductively as
\begin{align*}
\mathbb{M}^{\circ q}[k] (X):=\bigoplus_{k=\sum_j j\cdot k_j}\left(\bigotimes_{j\geq 0}\mathbb{M}[k_j]\big( \mathbb{M}^{\circ (q-1)}[j](X)\big) \right) 
\end{align*}
 This induces a filtration of the monadic bar construction on $\mathbb{M}$. We denotes the filtrations $B(1,\mathbb{M},A)[\leq k]$, and the quotients as $B(1,\mathbb{M},A)[=k]$. See \cite{Rez12} for a full discussion.

\begin{Rem} \label{RemOL}
If $A$ is an $\mathbb{M}$ algebra then $B(1,\mathbb{M},\overline{A})=\bigoplus_k B(1,\mathbb{M},A)[=k]$.
\end{Rem}

\begin{Rem}
If $\mathbb{M}:R-mod \to R-mod$ is an exponentially graded monad, and $A$ is a $\mathbb{M}$-algebra there is a spectral sequence:
\begin{align*}
\bigoplus_k H^*\mathrm{Hom}_R(B(1,\mathbb{M},A)[=k],R) \Rightarrow AQ^*_\mathbb{M}(A)
\end{align*} 
natural in both $A$, and $\mathbb{M}$. We will call this spectral sequence the $\mathbb{M}$ spectral sequence.
\end{Rem}
This spectral sequence arises as the dual filtration to the filtration of $B(1,\mathbb{M},A)$ coming from the exponential grading on $\mathbb{M}$.

 Let $E_h$ is the Lubin-Tate theory at height $h$, and $K(h)$ be the height $h$ Morava $K$-theory. We will be concerned with the commutative operad in the $K(h)$-local category, as well at in the category of $K(h)$-local $E_h$-modules. 
 
 \begin{Def}
 If $E$ is a ring spectrum let $E-mod$ denote the category of $E$-modules. If $K$ is a ring spectrum such that $E$ is $K$-local then let $E-mod_K$ be the category of $K$-completed $E$-modules.
 \end{Def}
 
\begin{Def}
Define the (reduced) commutative operads by
\begin{align*}
\mathrm{comm}(n)=\left\{\begin{array}{cc}
(S)_{K(h)} & n>0 \\
* & n=0
\end{array} \right. \\
\mathrm{comm}^E(n)=\left\{\begin{array}{cc}
E_h & n>0 \\
* & n=0
\end{array} \right.
\end{align*} Where $S$ is the sphere spectrum.
For a commutative ring $R$, let
\begin{align*}
\mathrm{comm}^{\mathrm{alg}}(n)=\left\{\begin{array}{cc}
R & n>0 \\
0 & n=0
\end{array} \right.
\end{align*}
Lastly we have the following operad in pointed spaces
\begin{align*}
\mathrm{comm}^{Top}(n)=\left\{\begin{array}{cc}
S^0 & n>0 \\
* & n=0
\end{array} \right.
\end{align*} 
\end{Def}

\begin{Not}
For both topological Andr\'{e}-Quillen cohomology and Andr\'{e}-Quillen cohomology we are going to suppress the operad (respectively the monad) from the notation when it is the commutative operad (respectively the free commutative algebra monad).
\end{Not}

\begin{Def}
Define $\mathrm{sLie}$ to be the Koszul dual operad to commutative operad in either the algebraic or topological settings.

If $M$ is a $R$-module, we define the free shifted Lie-algebra on $M$ with divided powers to be
\begin{align*}
\mathrm{sLie}^{dp}(M):= \prod \Big(\mathcal{K}(\mathrm{Comm}^\mathrm{alg})(n)\otimes M^{\otimes n} \Big)^{\Sigma_n}.
\end{align*} 
\end{Def}

\begin{Rem}
A $R$-chain complex $M$ is an $\mathcal{K}(\mathrm{Comm}^\mathrm{alg})$-algebra, i.e., a $\mathrm{sLie}$-algebra if and only if $M[1]$ is a differential graded Lie algebra, where $M[1]_n=M_{n+1}$.

Further if $R=\mathbb{F}_p$, then from \cite{Fre00} we see that $\mathrm{sLie}^{dp}(M)$ is the free restricted $p$ Lie algebra on $M$. 
\end{Rem}

\subsection{$\nu_h$-periodic unstable homotopy theory}
From now on all our spaces and spectra are completed at an odd prime $p$.
We will in this section summarize the results necessary to carry out our program.

Fix $h$, and let $T$ denote the telescope of a $\nu_h$-self map on a type $h$-complex. Bousfield and Kuhn (see for example \cite{Kuh08}) constructed a functor 
\begin{align*}
\Phi_T: Top_*\to Sp,
\end{align*}
such that $\Phi_T(\Omega^\infty E)\simeq L_TE$ for any spectrum $E$. The homotopy groups of the Bousfield-Kuhn fuctor give a version of the unstable $\nu_h$-periodic homotopy groups of a space:
\begin{Def}
Let $\Phi_h(X):=\Phi_T(X)_{K(h)}$, and $X$ a space.
\begin{align*}
\nu_h^{-1}\pi_*(X):=\pi_*\Phi_h(X).
\end{align*} 
 \end{Def}
\begin{Rem}
Note that this is what in \cite{BeRe17} are called the completed $\nu_h$-periodic homotopy groups, to distinguish them from the ``uncompleted'' unstable $\nu_h$-periodic homotopy groups studied by Bousfield, Davis, Mahowald, and others. These are given as the homotopy groups of the $n$'th telescopic monochromatic layer of $\Phi_T$.
\end{Rem}

For $X$ a finite space, and $E$ a commutative ring spectrum, we let $E^X$ be the Spanier-Whitehead dual of $X$ in $E$-modules as a non-unital commutative $E$-algebra. In \cite{BeRe17}, Behrens and Rezk constructed a map 
\begin{align*}
c: \Phi_h(X)\to  B(1,\mathcal{F}_{\mathrm{comm}}, S_{K(h)}^X)^\vee,
\end{align*}
where $\mathcal{F}_{\mathrm{comm}}$ is taken in the $K(h)$-local category, i.e., we $K(h)$-localize all coproducts.
\begin{Def}[\cite{BeRe17Survey}]
We say that a space $X$ is $\Phi_h$-good if the map $c$ is an equivalence for $X$.
\end{Def}

\subsection{Goodwillie Calculus}
 Given a functor $F: Top_* \to \mathcal{C}$, where $\mathcal{C}$ is the category spaces, spectra or some localization of spectra. Assume that $F$ preserves weak equivalences, $F$ is finitary, (i.e., determined by its value on finite CW-complexes) and $F(*)\simeq *$.  Goodwillie in \cite{Goo03} constructed a tower of functors $P_n(F):Top_*\to \mathcal{C}$, under $F$:
\xymat{F(X)\ar[dr] \ar[drr] && \\ \ldots \ar[r]& P_2(F)(X) \ar[r] &P_1(F)(X)}
Under certain conditions on both $F$ and $X$, one gets an equivalence 
\begin{align*}
F(X) \simeq \varprojlim_n P_n(F)(X).
\end{align*}
Further the layers of the tower 
\begin{align*}
D_n(F)(X):= \mathrm{Fib}\big[ P_n(F)(X)\to P_{n-1}(F)(X)\big] 
\end{align*} have the form $\Omega^\infty_\mathcal{C} (\partial_n(F) \wedge_{h\Sigma_n} X^{\wedge n})$, for some Borel-$\Sigma_n$-equivariant spectrum $\partial_n(F)$, and $\Omega^\infty_\mathcal{C}$ is $\Omega^\infty$ if $\mathcal{C}=Top_*$, and the identity if $\mathcal{C}$ is spectra. This implies that when the tower converges, i.e., when $F(X) \simeq \varprojlim_n P_n(F)(X)$, then we get a spectral sequence computing $\pi_*F(X)$ with input only dependent on stable information. 

When $F=id_{Top_*}$
 then $P_1(id_{Top_*})(X)=\Omega^\infty \Sigma^\infty X$,
  and $\pi_*P_2(id_{Top_*})(X)$ are the metastable homotopy groups, see \cite{Mah67}. The 
tower thus filters the homotopy groups, starting with some classical notions. Further 
it was shown in \cite{Chi05} that $\partial_*(id_{Top_*}):=\{\partial_n(id_{Top_*})\}_n$
 forms an operad. In the same paper  it was shown that $\partial_*(id_{Top_*})\simeq \mathrm{sLie}$, as operads. 

If we think of $\Phi_h$ as being a functor from spaces to $K(h)$-local spectra, then $\Phi_h$ is finitary, and thus we can set up Goodwillie calculus for it. 
It was proven in \cite{BeRe17Survey} that $P_n(\Phi_h)\simeq \Phi_h \circ P_n$. This implies that when the Goodwillie tower converges, we can calculate unstable $\nu_h$-periodic homotopy groups from stable $\nu_h$-periodic homotopy groups. 

Further from \cite{BeRe17} the map $c$ induces an equivalence
\begin{align} \label{P_n(Phi)}
P_k(\Phi_h)(X)\stackrel{\sim}{\to}  B(1,\mathcal{F}_{\mathrm{comm}}, S_{K(h)}^X)[\leq k]^\vee,
\end{align}
where again $\mathcal{F}_{\mathrm{comm}}$ is taken in the $K(h)$-local category.
Though in general convergence fails for the Taylor tower $P_k(\Phi_h)$, see \cite{BrHe16}, and therefore $c$ is in general not an equivalence. The result of \cite{AM99} can be interpreted to say that this map is an equivalence when $X$ is a sphere, and in \cite{BeRe17Survey} they gave a condition on a space being build by spherical fibrations implying that it is $\Phi_h$-good. 

\section{The monad $\mathbb{T}$}
 Recall, from \cite{Rez09}, that there is a monad 
\begin{align*}
 \mathbb{T}: (E_h)_*-mod \to (E_h)_*-mod
\end{align*} 
such that 
for $X\in E_h-mod$, with $\pi_* X$ is flat as a $(E_h)_*$-module, then
\begin{align*}
\pi_*\big(\mathcal{F}_{\mathrm{comm}^E}X\big)_{K(h)}\simeq \mathbb{T}\pi_*X_{\hat{\mathfrak{m}}},
\end{align*}
 Where for $E_*\cong \mathbb{Z}_{p^n}[[u_1,\ldots , u_{n-1}]][u^{\pm 1}]$, then $\mathfrak{m}=(p,u_1,\ldots , u_{n-1},u)$. Note that this differs from Rezk's notation where the monad he calls $\mathbb{T}$ has the property that $\pi_*\mathcal{F}_{\mathrm{comm}_+}X_+\simeq \mathbb{T}\pi_*X_+$, where $(\ )_+$ denotes a disjoint basepoint, and where $\mathrm{comm}_+$ is the the non reduced commutative operad, i.e., $\mathrm{comm}_+(0)=E_h$.
 
 \begin{Lem} If $X$ is a space such that $E_h^*X$ is a finitely generated free $E_h^*$-algebra, then there is a spectral sequence $AQ^*_\mathbb{T}(E_h^*X)\Rightarrow TAQ^*(E_h^X)$.
 \end{Lem}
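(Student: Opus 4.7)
The plan is to build the spectral sequence from the cosimplicial structure on the dualized bar construction. By definition
\begin{align*}
TAQ^*(E_h^X)=\pi_* B(1,\mathcal{F}_{\mathrm{comm}^E}, E_h^X)^\vee,
\end{align*}
and Spanier--Whitehead dualizing the simplicial bar construction yields a cosimplicial spectrum whose totalization has homotopy $TAQ^*(E_h^X)$. The Bousfield--Kan cosimplicial filtration then produces a spectral sequence with $E_1^{n,*}=\pi_*\bigl(\mathcal{F}_{\mathrm{comm}^E}^{\circ n}(E_h^X)\bigr)^\vee$ and $d_1$ the alternating sum of the dualized coface maps.

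The next task is to identify this $E_1$-page with the cochain complex computing $AQ^*_\mathbb{T}(E_h^*X)$. The hypothesis that $E_h^*X$ is finitely generated free makes $E_h^X$ a dualizable $E_h$-module with flat, finite homotopy, so Rezk's theorem gives
\begin{align*}
\pi_*\mathcal{F}_{\mathrm{comm}^E}(E_h^X)_{K(h)}\cong \mathbb{T}(E_h^*X).
\end{align*}
Iterating simplicial degree by simplicial degree and using that at each stage the homotopy stays in the regime where Rezk's theorem applies, one obtains an isomorphism of simplicial $E_h^*$-modules
\begin{align*}
\pi_* B(1,\mathcal{F}_{\mathrm{comm}^E}, E_h^X)\cong B(1,\mathbb{T},E_h^*X).
\end{align*}
Dualizing and using the finite, free hypothesis to commute $\pi_*$ past $F(-,E_h)$ identifies the $E_1$-page of the cosimplicial spectral sequence with the cochain complex $\mathrm{Hom}_{E_h^*}\bigl(B(1,\mathbb{T},E_h^*X), E_h^*\bigr)$. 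Passing to cohomology on $E_2$ then produces $AQ^*_\mathbb{T}(E_h^*X)$ by definition of the latter.

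The main obstacle is the inductive identification in the middle step: one must verify that the class of $E_h^*$-modules on which Rezk's comparison applies is preserved by $\mathbb{T}$, starting from a finitely generated free input, so that $\pi_*$ of each iterated free commutative algebra is computed by $\mathbb{T}^{\circ n}(E_h^*X)$ and the simplicial structure maps match up. Concretely this amounts to checking that $\mathbb{T}$ applied to a finite free $E_h^*$-module is flat after $\hat{\mathfrak{m}}$-completion, which follows from the structural results of \cite{Rez09}. Once this closure property is in hand, the rest of the argument is a formal consequence of comparing the two filtrations and their associated spectral sequences.
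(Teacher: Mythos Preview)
Your argument is essentially correct and is in fact the content of the result the paper cites: the paper's own proof is simply the sentence ``This is Proposition 4.7 of \cite{BeRe17}.''  What you have written is a reasonable sketch of how that proposition is proved---namely, by taking the Bousfield--Kan (skeletal) filtration of the dualized simplicial bar construction and using Rezk's identification $\pi_*\bigl(\mathcal{F}_{\mathrm{comm}^E}^{\circ n}(E_h^X)\bigr)_{K(h)}\cong \mathbb{T}^{\circ n}(E_h^*X)$ level by level, so that the $E_2$-page is by definition $AQ_\mathbb{T}^*(E_h^*X)$.

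One small point of precision: in your dualization step you invoke ``the finite, free hypothesis to commute $\pi_*$ past $F(-,E_h)$,'' but after the first application of $\mathbb{T}$ the modules $\mathbb{T}^{\circ n}(E_h^*X)$ are no longer finite over $E_h^*$.  What makes the universal coefficient identification go through at each simplicial level is rather that these modules remain \emph{free} (indeed polynomial) over $E_h^*$ after $\hat{\mathfrak{m}}$-completion; finiteness of $E_h^*X$ is used only at the start to get into Rezk's regime and to guarantee that $E_h^X$ is dualizable.  You effectively acknowledge this in your closing paragraph about closure under $\mathbb{T}$, so this is a matter of wording rather than a genuine gap.
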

 \begin{proof}
This is Proposition 4.7 of \cite{BeRe17}.
 \end{proof}
In \cite{BeRe17} this spectral sequence is called the Basterra spectral sequence. 

\begin{Rem} \label{BastSSrel} From \cite{Rez09} we know that
$\mathbb{T}$ inherits an exponential grading such that $X\in E_h-mod$, such that $\pi_* X$ is finite and flat as a $(E_h)_*$-module, then 
\begin{align*}
\pi_*\big(\mathcal{F}_{\mathrm{comm}^E}[ k]X\big)_{K(h)}\simeq \mathbb{T}[ k]\pi_*X.
\end{align*}
\end{Rem}
Note that the Basterra spectral sequence is well behaved with respect to the exponential grading of the monads $\mathcal{F}_{\mathrm{comm}^E}$ and $\mathbb{T}$, and hence we easily see 
 \begin{Cor}\label{BastSS}
 If $X$ is a space such that $E_h^*X$ is a finitely generated free $E_h^*$-algebra, then for all $k$ there are spectral sequences with maps between them:
\xymat{H^*(B(1,\mathbb{T},E_h^*X)[=k]) \ar[d] \ar@{=>}[r] & \pi_*(B(1,\mathcal{F}_{\mathrm{comm}^E},E_h^X)[=k])^\vee \ar[d] \\
H^*(B(1,\mathbb{T},E_h^*X)[\leq k])  \ar@{=>}[r] & \pi_*(B(1,\mathcal{F}_{\mathrm{comm}^E},E_h^X)[\leq k])^\vee  \\
AQ^*_\mathbb{T}(E_h^*X) \ar[u]  \ar@{=>}[r] & TAQ^*(E_h^X) \ar[u]  
}
\end{Cor}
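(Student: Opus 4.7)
The plan is to run the construction of the Basterra spectral sequence of \cite{BeRe17} one exponential-grading level at a time, and then assemble the three rows using naturality. The crucial input is Remark \ref{BastSSrel}: the equivalence $\pi_*\mathcal{F}_{\mathrm{comm}^E}(X)_{K(h)} \simeq \mathbb{T}\pi_*X$ underlying the Basterra comparison respects the exponential gradings summand-by-summand. Iterating through compositions of monads, this promotes to compatible equivalences
\begin{align*}
\pi_*\bigl(\mathcal{F}_{\mathrm{comm}^E}^{\circ q}[k](E_h^X)\bigr)_{K(h)} \simeq \mathbb{T}^{\circ q}[k](E_h^*X)
\end{align*}
for every $q,k$. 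Applied simplicial-degree-wise to the two-sided bar construction, this identifies the filtrations $B(1,\mathcal{F}_{\mathrm{comm}^E},E_h^X)[\leq k]$ and $B(1,\mathbb{T},E_h^*X)[\leq k]$, together with their associated gradeds $[=k]$, under the Basterra comparison map.

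Next I would apply the same spectral sequence argument that produces the unfiltered Basterra spectral sequence to each filtered piece separately. Since the Basterra spectral sequence arises from the simplicial/skeletal filtration of the bar construction, and the flatness hypothesis on $E_h^*X$ is unaffected by restricting to a fixed exponential grading $[\leq k]$ or $[=k]$, the same argument goes through without change. This yields the top two rows of spectral sequences, and naturality of the construction in the input algebra gives the horizontal comparison at each filtration level.

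The vertical maps come from the cofiber sequences
\begin{align*}
B(1,-,-)[\leq k-1] \to B(1,-,-)[\leq k] \to B(1,-,-)[=k]
\end{align*}
on both sides, together with the identification $B(1,-,-) = \mathrm{colim}_k B(1,-,-)[\leq k]$ which recovers the original $AQ^*_\mathbb{T}$-to-$TAQ^*$ spectral sequence as the bottom row; applying cohomology/Spanier-Whitehead duality produces maps in the direction indicated in the statement. Naturality of the Basterra spectral sequence in its inputs then supplies the required squares.

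The main obstacle I anticipate is bookkeeping rather than a conceptual hurdle: one must check that the flatness hypothesis on $E_h^*X$ lifts to the exponentially-graded pieces well enough that the spectral sequence is defined and has the expected $E_1$-page at each filtration level, and verify that the comparison of filtrations on the bar constructions is strictly compatible with the simplicial structure used to set up the Basterra spectral sequence. Once these details are in place, the corollary follows formally from the naturality already established in \cite{BeRe17} and \cite{Rez09}.
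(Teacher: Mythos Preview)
Your proposal is correct and follows essentially the same route as the paper: the paper simply notes that the Basterra spectral sequence is well behaved with respect to the exponential gradings on $\mathcal{F}_{\mathrm{comm}^E}$ and $\mathbb{T}$ (via Remark~\ref{BastSSrel}) and declares the corollary immediate, while you have spelled out what ``well behaved'' means at the level of iterated monad compositions, bar filtrations, and naturality. There is no substantive difference in approach, only in the level of detail supplied.
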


\begin{Not}
We will exclusively focus on the height $h=1$ case. 
 Let $E=E_1$ be the spectum representing $p$-completed complex $K$-theory, and $E^*$, $E_*$ be its cohomology, homology respectively. Further let $\Phi:=\Phi_1$.
\end{Not}

From \cite{McC83} we see that if $A$ is a free $E_*$-module with a finite basis $\{a_1,\ldots , a_n\}$ then $\mathbb{T}(A)$ is the free commutative $E_*$-algebra with basis $\bigcup_{i=0}^\infty\{\theta^i a_1,\ldots \theta^i a_n\}$, where we identify $\theta^0a=a$. Further we have $a_i\in \mathbb{T}[1](A)$, $\theta^i(a_i)\in \mathbb{T}[p^i](A)$, and if $x\in \mathbb{T}[k](A)$, and $y\in \mathbb{T}[l](A)$ then $xy\in \mathbb{T}[k+l](A)$.

\section{Monad cohomology for $\mathbb{T}$}
\begin{Prop}\label{GrothCollapse} If $X$ is a finite space such that $E^*X$ is a finitely generated and free $E^*$-algebra, the spectral sequence from Corollary \ref{BastSSrel},
\begin{align*}
H^*(B(1,\mathbb{T},E^*X)[=k])\Rightarrow \pi_*(B(1,\mathcal{F}_{\mathrm{comm}^E},E^X)[=k])^\vee,
\end{align*}
 collapses for all $k$. Further we see that 
 \begin{align*}
 \bigoplus_k H^*(B(1,\mathbb{T},E^*X)[=k])\simeq \Lambda(\overline{\theta})\otimes \mathrm{sLie}^{dp}(E_*X). 
 \end{align*}
 Here $\mathrm{sLie}^{dp}$ is the free $E_*$-Lie algebra with divided powers, and $\overline{\theta}$ is of degree $p$ with respect to the exponential grading of $\mathbb{T}$, and the homological degree $|\overline{\theta}(x)|=p|x|-1$.
\end{Prop}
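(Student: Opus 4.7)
The plan is to identify the $E_1$-page of the Basterra spectral sequence with $\Lambda(\overline{\theta})\otimes\mathrm{sLie}^{dp}(E_*X)$ via a PBW-type filtration and classical Koszul duality, then to compute the abutment independently using the corollary identifying $\bigoplus_k \pi_*B(1,\mathcal{F}_\mathcal{O},A)[=k]^\vee$ with a free Koszul-dual algebra, and finally to deduce collapse from matching total bigraded dimensions.

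I would first apply Remark \ref{RemOL} to rewrite the direct sum $\bigoplus_k H^*(B(1,\mathbb{T},E^*X)[=k])$ as $H^*B(1,\mathbb{T},\overline{E^*X})$. By the description of $\mathbb{T}$ just before this proposition, on finite free $E^*$-modules $A$ we have $\mathbb{T}(A)\cong \mathbb{F}(\Theta^{\infty}A)$, where $\mathbb{F}$ is the free commutative $E^*$-algebra monad and $\Theta^{\infty}(A)=\bigoplus_{i\geq 0}\theta^iA$. This is an isomorphism of endofunctors but not of monads, since $\theta$ and the multiplication are tied together by a Cartan-type formula; however, the correction terms sit in strictly higher weight filtration, so there is an associated-graded monad structure on $\mathbb{T}$ of the form $\mathbb{F}\otimes\Theta^{\infty}$. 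Filtering the bar construction $B(1,\mathbb{T},\overline{E^*X})$ by weight then produces an auxiliary spectral sequence whose $E_1$-page is a tensor product of the two classical Koszul complexes. The one for $\mathbb{F}$ computes $\mathrm{sLie}^{dp}(E_*X)$ by standard Koszul duality of the commutative operad over $E^*$, with the divided-powers structure arising from the $\Sigma_n$-invariants in the definition of $\mathrm{sLie}^{dp}$. The Koszul complex of $\Theta^{\infty}$, regarded as a free polynomial algebra on a single unary operation, is the exterior algebra on one generator, supplying the factor $\Lambda(\overline{\theta})$ with $|\overline{\theta}|=-1$ and $\mathbb{T}$-weight $p$. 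Bidegree considerations rule out all further differentials in the auxiliary spectral sequence, giving the claimed identification.

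For the collapse of the Basterra spectral sequence itself, I would compute the abutment $\bigoplus_k\pi_*(B(1,\mathcal{F}_{\mathrm{comm}^E},E^X)[=k])^\vee$ using Remark \ref{RemOL} and the Corollary on the $TAQ_\mathcal{O}$ spectral sequence, identifying it with $\pi_*\mathcal{F}^{dp}_{\mathcal{K}(\mathrm{comm}^E)}(E\wedge X)$ in the $K(h)$-local category. The Koszul dual $\mathcal{K}(\mathrm{comm}^E)$ is an $E$-linear model of the shifted Lie operad, so this target is a free shifted Lie algebra with divided powers on $E\wedge X$; taking $K(h)$-local $\pi_*$ introduces the single additional unary power operation $\theta$, which on this free object contributes precisely the factor $\Lambda(\overline{\theta})$. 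Hence the abutment already has the same bigraded $E_*$-module size as the $E_1$-page. Since the edge map of the Basterra spectral sequence respects both the weight grading and the $\theta$-structure used on either side of the identification, there is no room for nonzero differentials and the spectral sequence collapses.

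The main obstacle I expect is the PBW-style associated-graded statement, namely verifying that every correction term in the Cartan-type formulae between $\theta$ and the multiplication in $\mathbb{T}$ strictly increases weight filtration, so that the $E_1$-page of the auxiliary spectral sequence really does split as $\mathbb{F}\otimes\Theta^{\infty}$ computed by classical Koszul duality. Once this structural input is in hand, both the identification of the $E_1$-page and the collapse of the Basterra spectral sequence reduce to essentially formal bookkeeping in Koszul duality and a dimension count.
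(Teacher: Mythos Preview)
Your computation of the $E_1$-page is essentially the paper's argument in different clothing. The paper invokes the Grothendieck spectral sequence of \cite{BeRe17}, Proposition~3.5,
\[
\mathrm{Ext}_{\Delta}^s\big(AQ_t(\overline{E^*X}),\,\overline{E^*}\big)\ \Rightarrow\ AQ^{s+t}_{\mathbb{T}}(\overline{E^*X}),
\]
which is precisely the formalization of your PBW filtration: a $\mathbb{T}$-algebra is a $\Delta$-module in commutative algebras, and the composite-functor spectral sequence first takes commutative $AQ$ (producing $\mathrm{sLie}^{dp}$) and then $\mathrm{Ext}_\Delta$ (producing $\Lambda(\overline{\theta})$). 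The advantage of the paper's packaging is that collapse is automatic: since $\Delta=\mathbb{Z}_p[\theta]$ has global dimension $1$, the spectral sequence is concentrated in $s\in\{0,1\}$ and there is literally no room for a $d_2$. This sidesteps the weight-filtration check you flag as the ``main obstacle''.

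The genuine gap is in your treatment of the abutment. You write that ``taking $K(h)$-local $\pi_*$ introduces the single additional unary power operation $\theta$, which on this free object contributes precisely the factor $\Lambda(\overline{\theta})$'', and then describe the remaining work as ``formal bookkeeping in Koszul duality and a dimension count''. This step is not formal: it is exactly the content of Brantner's computation of the Hecke operations on $K(1)$-local spectral Lie algebras (Theorem~4.4.4 of \cite{Bra17}, quoted in the paper as Theorem~\ref{Brantner}). One must actually compute the $E$-homology of the $K(1)$-localized Goodwillie layers, equivalently the $E$-homology of the partition complexes after $K(1)$-localization, to know that (a) there is only one additional unary operation, (b) it squares to zero on the Lie side, and (c) there are no exotic higher-arity operations. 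The paper imports this as an external input and then finishes with a rational rank comparison between $\Lambda(\overline{\theta})\otimes\mathrm{sLie}^{dp}(E_*X)$ and Brantner's $\Lambda(\theta)\otimes\mathrm{sLie}(E_*X)$; note in particular that the abutment carries $\mathrm{sLie}$ rather than $\mathrm{sLie}^{dp}$, so the match is only after rationalization, not the exact bigraded coincidence you assert. Without invoking Brantner (or reproving an equivalent statement), your collapse argument is circular: you are assuming the answer you need in order to run the dimension count.
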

Before we start the proof we need to recall a result of \cite{Bra17}.
\begin{Theo}\label{Brantner}
If $X$ is a finite space such that $E^*X$ is a finitely generated and free $E^*$-algebra, then the $E_1$-page of the $E$-based Goodwillie spectral sequence for $\Phi$, at $X$ is given by $\Lambda(\theta)\otimes \mathrm{sLie}(E_*X)$.
\end{Theo}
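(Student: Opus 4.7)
The plan is a size-count collapse argument. I would compute both the $E_1$-page and the abutment of the Basterra spectral sequence of Corollary \ref{BastSS} as bigraded $E_*$-modules, check that they agree in every bidegree, and conclude both that there is no room for nontrivial differentials and that the stated identification holds.

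First I would identify the abutment. The Behrens--Rezk equivalence \ref{P_n(Phi)} identifies $B(1, \mathcal{F}_{\mathrm{comm}}, S_{K(h)}^X)[=k]^\vee$ with the $k$-th Goodwillie layer $D_k(\Phi)(X)$, and base-changing $\mathcal{F}_{\mathrm{comm}}$ to $\mathcal{F}_{\mathrm{comm}^E}$ in $E$-modules identifies $\pi_*(B(1, \mathcal{F}_{\mathrm{comm}^E}, E^X)[=k])^\vee$ with $E^*D_k(\Phi)(X)$. Summed over $k$, this is the $E$-cohomological $E_1$-page of the Goodwillie spectral sequence for $\Phi$ at $X$, Spanier--Whitehead dual to the $E$-homological version. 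By Brantner's Theorem \ref{Brantner} the $E$-homological $E_1$-page is $\Lambda(\theta) \otimes \mathrm{sLie}(E_*X)$; since $E^*X$ is finitely generated and free, dualizing weight by weight converts $\mathrm{sLie}$ to $\mathrm{sLie}^{dp}$ and $\theta$ to its dual generator $\overline{\theta}$. Hence the abutment assembles into $\Lambda(\overline{\theta}) \otimes \mathrm{sLie}^{dp}(E_*X)$ with the claimed bidegrees.

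Second I would compute the $E_1$-page directly from the structure of $\mathbb{T}$. By \cite{McC83}, $\mathbb{T}(A)$ is the free commutative $E_*$-algebra on the monomials $\{\theta^i a\}$ with $a$ in a basis of $A$, so the underlying graded-exponential functor factors as $\mathcal{F}_{\mathrm{comm}^{\mathrm{alg}}} \circ \Theta$, where $\Theta$ freely adjoins the $\theta$-operations. Passing to the trivial $\mathbb{T}$-algebra $\overline{E^*X}$ via Remark \ref{RemOL} and invoking the exponential structure from \cite{Rez12}, $B(1, \mathbb{T}, \overline{E^*X})$ splits, weight by weight, as a tensor product of the algebraic commutative bar construction and the bar construction on the single unary operation $\theta$. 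Algebraic Koszul duality for $\mathcal{F}_{\mathrm{comm}^{\mathrm{alg}}}$ in the sense of \cite{Fre04} identifies the cohomology of the first factor with $\mathrm{sLie}^{dp}(E_*X)$, while the second factor contributes an exterior generator $\Lambda(\overline{\theta})$ of exponential weight $p$ and homological degree $p|x|-1$. This yields $\bigoplus_k H^*(B(1, \mathbb{T}, E^*X)[=k]) \cong \Lambda(\overline{\theta}) \otimes \mathrm{sLie}^{dp}(E_*X)$.

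Comparing, the $E_1$-page and the abutment agree in every bidegree, so the spectral sequence collapses at $E_1$ and the edge map $E_1 \to E_\infty$ is an isomorphism, which is exactly the identification asserted by the proposition. The main obstacle will be step two: the factorization of $\mathbb{T}$ through $\theta$-operations is subject to Adem-type relations \cite{Rez09}, and one must verify that these relations do not obstruct the weight-wise splitting of the bar construction. The cleanest route is to use Rezk's graded-exponential structure to reduce to an inductive check in each exponential weight, where the Adem relations involve only generators of strictly smaller weight and hence do not interact with the top-weight piece being computed.
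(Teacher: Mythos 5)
There is a genuine gap, and it is structural: your argument is circular for the statement at hand. The statement you were asked to prove \emph{is} Theorem \ref{Brantner} --- the identification of the $E_1$-page of the $E$-based Goodwillie spectral sequence for $\Phi$ at $X$ with $\Lambda(\theta)\otimes \mathrm{sLie}(E_*X)$ --- yet in your first step you identify the abutment of the Basterra spectral sequence precisely ``by Brantner's Theorem \ref{Brantner}.'' You are invoking the theorem to prove itself. In the paper this statement is not derived from the $\mathbb{T}$-cohomology machinery at all: it is imported wholesale from Brantner's thesis (\cite{Bra17}, Theorem 4.4.4), where it rests on a genuine computation of the Morava $E$-theory of the Goodwillie layers of $\Phi$ (equivalently, of the $K(1)$-local extended powers appearing in $B(1,\mathcal{F}_{\mathrm{comm}},S_{K(1)}^X)[=k]$), which is exactly the content your size-count needs as input and which your sketch never supplies independently. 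What you have actually written is, up to details, the paper's proof of Proposition \ref{GrothCollapse}, whose logic runs in the opposite direction: there, Theorem \ref{Brantner} is the known abutment and the collapse of the Grothendieck spectral sequence of \cite{BeRe17}, Proposition 3.5, over $\Delta=\mathbb{Z}_p[\theta]$ is the conclusion.

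Your second step also deviates from the paper in a way worth flagging even on its own terms: rather than your proposed weight-wise splitting of $B(1,\mathbb{T},\overline{E^*X})$ through a factorization $\mathcal{F}_{\mathrm{comm}^{\mathrm{alg}}}\circ\Theta$ (which, as you note, must confront the Adem-type relations in $\mathbb{T}$ and is not verified), the paper computes $AQ^*_\mathbb{T}(\overline{E^*X})$ via the Grothendieck spectral sequence $\mathrm{Ext}^s_{\Delta}(AQ_t(\overline{E^*X}),\overline{E^*})\Rightarrow AQ^{s+t}_\mathbb{T}(\overline{E^*X})$, which collapses for the cheap reason that $\Delta=\mathbb{Z}_p[\theta]$ has $\mathrm{Ext}$ concentrated in degrees $s=0,1$ so the $d_2$ has no target. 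But even if you repaired that step, it would only give you the input side of the comparison; without an independent identification of $\pi_*(B(1,\mathcal{F}_{\mathrm{comm}^E},E^X)[=k])^\vee$ --- i.e., without Brantner's actual theorem --- no collapse or counting argument can produce the statement about the Goodwillie $E_1$-page.
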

\begin{proof}
This follows immediately from \cite{Bra17} Theorem 4.4.4.
\end{proof}
\begin{Rem}
We will later define an operation $\overline{\theta}$ on $AQ_\mathbb{T}^*(A)$, though it is not clear to the author how this operation relates to the operation $\theta$ from Brantner's thesis.
\end{Rem}
\begin{proof}[Proof of Proposition \ref{GrothCollapse}]
Due to Remark \ref{RemOL} it is enough to show that
\begin{align*}
 AQ^*_\mathbb{T}(\overline{E^*X})\Rightarrow \pi_*TAQ(\overline{E^X})
\end{align*}
 collapses.
Recall from \cite{BeRe17}, Proposition 3.5, that there is a Grothendieck spectral sequence 
\begin{align} \label{GrothSS}
E_2^{s,t}=\mathrm{Ext}_{\Delta}^s( AQ_t(\overline{E^*X}), \overline{E^*})\Rightarrow AQ^{s+t}_\mathbb{T}(\overline{E^*(X)})
\end{align}
since $\overline{E^*X}$ is free as a $E^*$-module.

Recall that $AQ_*$ is the same as the operad homology for the algebraic commutative operad $\mathrm{comm}^{alg}$, see \cite{LoVa12} section 12.1.1.  
From the same section it follows that 
\begin{align*}
AQ_*(\overline{E^*X})\cong \bigoplus_k \big(\mathrm{sLie}(k)\otimes E^*(X)^{\otimes k}\big)_{\Sigma_k}.
\end{align*}

So it follows that the input to the Grothendieck spectral sequence is 
\begin{align*}
\mathrm{Ext}_{\Delta}^*( AQ_*(\overline{E^*X}), \overline{E^*})\simeq \Lambda(\overline{\theta})\otimes \mathrm{sLie}^{dp}(E_*X)
\end{align*}
as $\Delta=\mathbb{Z}_p[\theta]$.
Note that in the spectral sequence from (\ref{GrothSS}) is concentrated in degree $s=0,1$ as $\Delta= \mathbb{Z}_p[\theta ]$. The $d_2$-differential is $E_2^{s,t} \to E_2^{s+2,t-1}$, therefore we can conclude that the spectral sequence collapses and therefore 
\begin{align*}
H^* B(1,\mathbb{T}, \overline{E^*(X)})\simeq\Lambda(\overline{\theta})\otimes \mathrm{sLie}^{dp}(E_*X).
\end{align*}

We can see from the equivalence $(\ref{P_n(Phi)})$ that the $E$-based spectral sequence for $\Phi$ at $X$ coincides with the $TAQ$ spectral sequence for $E^X$. We therefore have from Theorem \ref{Brantner} know that the abutment of the spectral sequence is $\Lambda(\theta)\otimes \mathrm{sLie}(E_*X)$. It is clear that both input and output of the spectral sequence of the statement of the proposition is free, and after rationalization has the same dimension over $\mathbb{Q}_p[u^{\pm 1}]$, where $E_*=\mathbb{Z}_p[u^{\pm 1}]$. Thus there is no room for differentials, and that concludes the proof.
\end{proof}

\begin{Cor} \label{AQ triv}
If $M=E_*\{x_1,\ldots x_n\}$ is a trivial $\mathbb{T}$-algebra, then 
\begin{align*}
AQ_\mathbb{T}^*(M)\simeq \Lambda(\overline{\theta})\otimes \mathrm{sLie}^{dp}(M^\vee).
\end{align*}
\end{Cor}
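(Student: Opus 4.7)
The plan is to re-run the proof of Proposition \ref{GrothCollapse}, but now with the trivial algebra $M$ replacing $\overline{E^*X}$; the argument is structural and only uses that the input is free and finitely generated over $E_*$, not that it comes from a space.

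First, since $M$ already carries the trivial $\mathbb{T}$-algebra structure we have $\overline{M} = M$, so no preparatory step is needed. I would invoke the Grothendieck spectral sequence of \cite{BeRe17}, Proposition 3.5,
\begin{align*}
E_2^{s,t} = \mathrm{Ext}^s_\Delta\!\bigl(AQ_t(M), E_*\bigr) \Longrightarrow AQ^{s+t}_\mathbb{T}(M),
\end{align*}
with $\Delta = \mathbb{Z}_p[\theta]$, which applies because $M$ is free over $E_*$.

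Second, I would identify the $E_2$-page exactly as in the parent proposition. As $M$ is a trivial algebra, $AQ_*(M)$ coincides with the operadic homology of the free commutative algebra monad on the module $M$, and by \cite{LoVa12}, Section 12.1.1, this equals $\bigoplus_k \bigl(\mathrm{sLie}(k) \otimes M^{\otimes k}\bigr)_{\Sigma_k}$. Applying $\mathrm{Ext}_\Delta(-,E_*)$ to each summand produces an exterior factor $\Lambda(\overline{\theta})$ from the two-term projective resolution of $E_*$ over $\Delta = \mathbb{Z}_p[\theta]$, while the linear duality inherent in $\mathrm{Ext}_\Delta(-,E_*)$ converts coinvariants into invariants, turning $(\mathrm{sLie}(k) \otimes M^{\otimes k})_{\Sigma_k}$ into $(\mathrm{sLie}(k) \otimes (M^\vee)^{\otimes k})^{\Sigma_k}$. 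Assembling over $k$ gives the divided-power free shifted Lie algebra, so
\begin{align*}
E_2 \cong \Lambda(\overline{\theta}) \otimes \mathrm{sLie}^{dp}(M^\vee).
\end{align*}

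Third, I would collapse the spectral sequence for the same reason as before. The global dimension of $\Delta = \mathbb{Z}_p[\theta]$ is one, so $E_2^{s,t}$ is concentrated in $s \in \{0,1\}$; every higher differential $d_r \colon E_r^{s,t} \to E_r^{s+r,t-r+1}$ with $r \geq 2$ therefore has target zero. Since the $E_\infty$-page is free as a bigraded $E_*$-module, there are no extension problems, and the desired identification follows. The only point requiring mild care — really the only place any work is done beyond quoting the parent proposition — is matching the bigrading conventions so that $\overline{\theta}$ carries exponential degree $p$ and shifts homological degree by $p|x|-1$; I do not anticipate a genuine obstacle here.
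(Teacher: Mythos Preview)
Your proposal is correct and matches the paper's approach. The paper states Corollary~\ref{AQ triv} without proof, since it is nothing more than the first half of the proof of Proposition~\ref{GrothCollapse}---the Grothendieck spectral sequence computation of $AQ^*_\mathbb{T}(\overline{E^*X})$---applied with an arbitrary finitely generated free $E_*$-module $M$ in place of $\overline{E^*X}$; you have simply written this out. One minor quibble: saying ``the global dimension of $\Delta=\mathbb{Z}_p[\theta]$ is one'' is not literally correct (it is two), but the conclusion that the relevant $\mathrm{Ext}$ groups are concentrated in $s\in\{0,1\}$ is exactly what the paper asserts and is what you need, since the inputs are free over $E_*$ and only the $\theta$-direction contributes.
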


\subsection{The operation $\overline{\theta}$}
We will now give a chain level description of the $\overline{\theta}$ that showed up in Proposition \ref{GrothCollapse}.
Assume that $M$ is free as a $E_*$-module with generators $x_1,\ldots x_n$, then $\mathbb{T}(M)\cong E_*[\theta^ix_j|i\geq 0, \ 1\leq 1\leq n]$. Given a monomial $m\in \mathbb{T}(E_*\{a_1,\ldots a_k\})$, of the form $m=\big(\theta^{i_1}a_{1} \big)^{e_1}\cdot \ldots \cdot \big(\theta^{i_k}a_{j_k} \big)^{e_k}$, we can define an operation
\begin{align*}
\underline{m}: M^{\otimes k}&\to \mathbb{T}(M) \\
x_{j_1}\otimes \ldots \otimes  x_{j_k} &\mapsto \big(\theta^{i_1}x_{j_1} \big)^{e_1}\cdot \ldots \cdot \big(\theta^{i_k}x_{j_k} \big)^{e_k}
\end{align*}
We can therefore write elements of $\mathbb{T}(M)$ as sums of elements of the form
\begin{align*}
 \underline{m}\left| \begin{array}{c}
x_{j_1} \\ \vdots \\ x_{j_k} 
\end{array} \right.  = \underline{m}|\overrightarrow{x}
\end{align*}
For the element $\underline{m}(x_{j_1}\otimes \ldots x_{j_k})$. There is no need for us to restrict this definition to monomials, instead of all polynomials.
 Note if $m^1,m^2_1, \ldots m^2_k$ are monomials of this form we get elements of $\mathbb{T}(\mathbb{T}(M))$ by
\begin{align*}
 \underline{m_1}\left| \begin{array}{c}
\underline{m_{1}^2} \\ \vdots \\ \underline{m^2_{k}}  
\end{array} \right|   \begin{array}{c}
\overrightarrow{x_{1}} \\ \vdots \\ \overrightarrow{x_{k}} 
\end{array} = \underline{m^1}\left| \overrightarrow{\underline{m^2}}\right|\overrightarrow{x}
\end{align*}
and we see that these elements generate all of $\mathbb{T}^{\circ 2}(M)$. In the same way we can construct generators $\mathbb{T}^{\circ s}(M)$ with the names
\begin{align*} \begin{array}{c|c|c|c|c}
\underline{m^1} & \overrightarrow{\underline{m^2}}& \ldots & \overrightarrow{\underline{m^s}}  &\overrightarrow{x}.
\end{array}
\end{align*}
Let $\overline{B}$ denote the normalized bar complex, then we wish to define
\begin{align*}
\Theta^\vee: \overline{B}_s(1,\mathbb{T},A) &\to \overline{B}_{s-1}(1,\mathbb{T},A) \\ 
\left[\begin{array}{c|c|c|c|c}
\underline{m^1} & \overrightarrow{\underline{m^2}}& \ldots & \overrightarrow{\underline{m^s}}  &\overrightarrow{x}
\end{array} \right] &\mapsto \left\{ \begin{array}{cc}(-1)^s \left[\begin{array}{c|c|c|c}
 \overrightarrow{\underline{m^2}}& \ldots & \overrightarrow{\underline{m^s}}  &\overrightarrow{x}
\end{array} \right] & \text{if } m^1=\theta (a_1), \\ \\
0 & \text{else.}
\end{array} \right.
\end{align*}
We wish to check that this in fact forms a chain map. The first case is $s=1$, if $m^1\neq \theta(a)$ then it is clear, otherwise we see that
\xymat{[\theta | x_j]\ar@{|->}[r]^{\Theta^\vee} \ar@{|->}[d]^d & -x_j \ar@{|->}[d]^d \\ -\theta(x_j) \ar@{|->}[r]& 0.}
For $s>1$, and $m^1=\theta(a)$, we have
\begin{align*}
d\circ \Theta^\vee \Big(\left[ \underline{\theta(a)} \left| \underline{m^2}\right| \ldots \left| \overrightarrow{\underline{m^s}}  \right|\overrightarrow{x} \right] \Big) &= (-1)^{s-1} d\left.\left(\left[ \underline{m^2}\right| \ldots \left| \overrightarrow{\underline{m^s}}  \right|\overrightarrow{x} \right]\right) \\
&=\Theta^\vee \circ d \Big( \left[ \underline{\theta(a)} \left| \underline{m^2}\right| \ldots \left| \overrightarrow{\underline{m^s}}  \right|\overrightarrow{x} \right]\Big)
\end{align*}
Since $m^2\neq 1$ then it is clear that the term $\left[-\underline{\theta(a)}\circ \underline{m^2}\right| \ldots \left| \overrightarrow{\underline{m^s}}\left.  \right|\overrightarrow{x} \right]$ get send to zero by $\Theta^\vee$, and $(-1)^{s+1}=(-1)^{s-1}$. When $m^1\neq \theta(a_1)$, then we know that $\underline{m^1}( m^2)$ does not contain any terms of the form $\theta(a_j)$, and hence we are done. We can therefore give the following definition.
\begin{Def}\label{DefTheta}
If $A$ is a $\mathbb{T}$-algebra then we have operations:
\begin{align*}
\Theta: AQ_\mathbb{T}^*(A)  \to AQ^{*+1}_\mathbb{T}(A)
\end{align*}
given by $H^*(\Theta^\vee)$.
\end{Def}

\begin{Lem} \label{Theta_def}
If $A$ is a finitely generated and free as a $E^*$-module, then acting by $\Theta$ on $AQ^*_\mathbb{T}(\overline{A})$ coincides with multiplication by $\overline{\theta}$ in  $\Lambda(\overline{\theta})\otimes \mathrm{sLie}^{dp}(A^\vee)$. 

Further we can restrict $\Theta$ to
\begin{align*}
\Theta: H^*B(1,\mathbb{T},A)[\leq k])& \to H^{*+1}B(1,\mathbb{T},A)[\leq pk]) \\ 
\Theta: H^*B(1,\mathbb{T},A)[= k])& \to H^{*+1}B(1,\mathbb{T},A)[=pk])
\end{align*}
\end{Lem}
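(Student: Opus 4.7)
\emph{Proof plan.} The plan is to identify $\Theta$ with multiplication by $\overline{\theta}$ by reconciling the chain-level definition of $\Theta^\vee$ with the Grothendieck spectral sequence (\ref{GrothSS}) used in the proof of Proposition \ref{GrothCollapse}. Recall that the exterior factor $\Lambda(\overline{\theta}) \cong \mathrm{Ext}^*_\Delta(\mathbb{Z}_p,\mathbb{Z}_p)$ appeared there because $\Delta = \mathbb{Z}_p[\theta]$, with $\overline{\theta}$ represented by the two-term resolution $0 \to \Delta \xrightarrow{\cdot\theta} \Delta \to \mathbb{Z}_p \to 0$. Via Yoneda, this Ext class acts on $\mathrm{Ext}^*_\Delta(AQ_t(\overline{A}),\overline{E^*})$, and because the spectral sequence collapses this determines the action of $\overline{\theta}$ on $AQ^*_\mathbb{T}(\overline{A})$ under the identification with $\Lambda(\overline{\theta})\otimes \mathrm{sLie}^{dp}(A^\vee)$.

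To verify that $\Theta$ realizes this Yoneda action at the chain level, I would argue as follows. A cocycle $\varphi \in \overline{B}_{s-1}(1,\mathbb{T},\overline{A})^\vee$ representing a pure Lie class $\sigma \in \mathrm{sLie}^{dp}(A^\vee)$ is, by definition of $\Theta^\vee$, pulled back to the cocycle $\varphi\circ \Theta^\vee$ supported on chains whose outer label is $\theta(a_1)$. Tracing through the Grothendieck filtration (which converts the $s=1$ Ext line into classes whose bar representatives carry one extra outer $\theta$), such chains represent precisely $\overline{\theta}\cdot\sigma$ on the $E_\infty$-page. On the other hand, applied to a cocycle whose support already consists of chains with outer label $\theta(a_1)$, the output $\Theta^\vee$ produces chains whose outer label is no longer of this form, so $\Theta^2$ vanishes on cohomology, matching $\overline{\theta}^2 = 0$. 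Since $\Theta$ and $\overline{\theta}\cdot(-)$ are both $E_*$-linear operators of cohomological degree $+1$ that agree on an additive basis of $AQ^*_\mathbb{T}(\overline{A})$ coming from $\mathrm{sLie}^{dp}(A^\vee)$, they agree throughout.

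For the filtration statement, the exponential grading on $\mathbb{T}$ is multiplicative under operadic composition, and $\theta(a_1) \in \mathbb{T}[p](E_*\{a_1\})$ has weight $p$. Thus $\Theta^\vee$ strips an outer weight-$p$ factor, inducing chain maps
$\overline{B}_\bullet(1,\mathbb{T},A)[\leq pk] \to \overline{B}_{\bullet-1}(1,\mathbb{T},A)[\leq k]$ and
$\overline{B}_\bullet(1,\mathbb{T},A)[=pk] \to \overline{B}_{\bullet-1}(1,\mathbb{T},A)[=k]$.
Dualizing and taking cohomology yields the claimed maps $\Theta: H^*B(1,\mathbb{T},A)[\leq k] \to H^{*+1}B(1,\mathbb{T},A)[\leq pk]$ and $\Theta: H^*B(1,\mathbb{T},A)[=k] \to H^{*+1}B(1,\mathbb{T},A)[=pk]$.

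The main obstacle is the chain-level Yoneda identification in the middle paragraph: the Grothendieck spectral sequence arises from a specific double complex built from a $\Delta$-resolution of $\overline{E^*}$, and matching the filtration-shifting Yoneda representative of $\overline{\theta}$ with the naïve ``outer $\theta$'' chain requires careful sign-tracking and a compatible choice of resolution. Everything else is bookkeeping on the exponential grading.
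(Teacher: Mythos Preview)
Your proposal is essentially correct and aligned with the paper's approach, though you have supplied far more detail than the paper does: the paper's proof consists of the single sentence ``The first statement follows from checking the Basterra spectral sequence, and the second statement is an easy check.'' Your filtration argument for the second claim is exactly the routine verification the paper has in mind, and your Yoneda-action argument for the first claim is a reasonable unpacking of what ``checking the spectral sequence'' should mean (indeed, it is the Grothendieck spectral sequence~(\ref{GrothSS}), not the Basterra spectral sequence, that produces the identification with $\Lambda(\overline{\theta})\otimes \mathrm{sLie}^{dp}(A^\vee)$, so your choice of spectral sequence is the natural one here; the paper's reference to Basterra appears to be a slip). The honest caveat you flag about matching the chain-level representative with the Yoneda class is real but minor: since everything is concentrated in $\mathrm{Ext}^{0,1}$ over $\Delta=\mathbb{Z}_p[\theta]$, the two-term Koszul resolution you wrote down suffices and the comparison is a direct inspection rather than a delicate one.
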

\begin{proof}
The first statement follows from checking the Basterra spectral sequence, and the second sattement is an easy check.
\end{proof}

\subsection{The bracket, and the elements of $AQ_\mathbb{T}$}
One would hope to show that $AQ^*_\mathbb{T}$ admits the structure of a shifted Lie algebra with divided powers, unfortunately this is not clear to the author at this stage how to do this. So instead we will mimic the construction from the previous subsection.

Note that for $a_1a_2\in \mathbb{T}(E_*\{a_1a_2\})$, then if $m^1=a_1a_2$,
we can write
\begin{align*}
\left[\begin{array}{c|c|c|c|c}
\underline{m^1} & \overrightarrow{\underline{m^2}}& \ldots & \overrightarrow{\underline{m^s}}  &\overrightarrow{x}
\end{array} \right] = \left[ \begin{array}{c|c|c|c|c}
\underline{a_1a_2} & \begin{array}{c}
\overrightarrow{\underline{m^2}}_1 \\ \overrightarrow{\underline{m^2}}_2
\end{array} & \ldots & \begin{array}{c}
\overrightarrow{\underline{m^s}}_1 \\ \overrightarrow{\underline{m^s}}_2 
\end{array}   & \begin{array}{c}
\overrightarrow{x}_1 \\ \overrightarrow{x}_2 
\end{array}
\end{array} \right]
\end{align*}
If $x\in A$ we define 
\begin{align*}
\{\ ,x\}^\vee:\overline{B}_s(1,\mathbb{T},A)\to\overline{B}_{s-1}(1,\mathbb{T},A)
\end{align*}
by
\begin{align*}
\left[\begin{array}{c|c|c|c|c}
\underline{m^1} & \overrightarrow{\underline{m^2}}& \ldots & \overrightarrow{\underline{m^s}}  &\overrightarrow{x}
\end{array} \right] \mapsto (-1)^{s-1} \left[\begin{array}{c|c|c|c}
 \overrightarrow{\underline{n^2}}& \ldots & \overrightarrow{\underline{n^s}}  &\overrightarrow{y}
 \end{array} \right]
\end{align*}
if 
\begin{align*}
\left[\begin{array}{c|c|c|c|c}
\underline{m^1} & \overrightarrow{\underline{m^2}}& \ldots & \overrightarrow{\underline{m^s}}  &\overrightarrow{x}
\end{array} \right] = \left[ \begin{array}{c|c|c|c|c}
\underline{a_1a_2} & \begin{array}{c}
\overrightarrow{\underline{n^2}} \\ \underline{1}
\end{array} & \ldots & \begin{array}{c}
\overrightarrow{\underline{n^s}} \\ \underline{1} 
\end{array}   & \begin{array}{c}
\overrightarrow{y} \\ x 
\end{array}
\end{array} \right],
\end{align*}
and $0$ else.
Here $1:A\to \mathbb{T}(A)$ sends $a\mapsto a$.
Completely analogous to the definition of $\Theta^\vee$ above we easily check that this commutes with the differentials, and thus define a chain map.

\begin{Def} \label{DefBracket}
If $A$ is a $\mathbb{T}$-algebra, and $x\in A$, such that $x^\vee\in AQ^0_\mathbb{T}(A)$, then we can define
\begin{align*}
\{\ ,x^\vee\}: AQ^*_\mathbb{T}(A) \to AQ^{*+1}_\mathbb{T}(A) 
\end{align*}
by $H^*(\{\ ,x\}^\vee)$

Further for $y\in AQ^*_\mathbb{T}(A)$ define $[y,x^\vee]:=S_{y,x}\cdot \{\ ,x^\vee\}(y)$, where 
\begin{align*}
S_{y,x}:= \left\{\begin{array}{cc}
2 &\text{if } y=x^\vee, \\
1 & \text{Otherwise.}
\end{array} \right.
\end{align*}
\end{Def}

\begin{Rem}\label{bracketdef}
If $A$ is a commutative $R$ algebra, then it follows from \cite{LoVa12} section 12.1.1 that $AQ^*(A)$ admits the structure of a shifted Lie algebra with divided powers. Further the we have a map of exponentially graded monads $\mathcal{F}_{\mathrm{Comm}^{Alg}}\to \mathbb{T}$, further it is easy to check that the following diagram commutes. Let $\{a_1,\ldots, a_n\}$ be a $E_*$ generating set for $AQ^0_\mathbb{T}(A)$ then
\xymat{ AQ_\mathbb{T}^*(A)\otimes E_*\{a_1,\ldots , a_n\} \ar[rrr]^{[\ ,\ ]} \ar[d] &&& AQ_\mathbb{T}^*(A)\ar[dd] \\  AQ_\mathbb{T}^*(A)\otimes AQ_\mathbb{T}^*(A) \ar[d]&&& \\ AQ^*(A)\otimes AQ^*(A) \ar[rrr]^{[\ ,\ ]} && &AQ^*(A).}
Where the above horizontal map is the bracket as defined above, and the lower by the bracket in the shifted Lie algebra with divided powers.
\end{Rem}

\begin{Lem}\label{bracket}
Under the identification of $AQ^*_\mathbb{T}(\overline{A})\simeq \Lambda(\overline{\theta})\otimes \mathrm{sLie}^{dp}(A^\vee)$, given $y \in 1\otimes \mathrm{sLie}^{dp}(A^\vee)$, and $x\in A^\vee$ then $[y,x]$ as defined above, coincides with $[y,x]\in \mathrm{sLie}(A^\vee)$.

Further for $x\in A$ such that $x^\vee\in AQ^0_\mathbb{T}(A)$ the bracket restricts to
\begin{align*}
 H^*(B(1,\mathbb{T},A)[\leq k])  \stackrel{[\ ,x^\vee ]}{\to} H^{*+1}(B(1,\mathbb{T},A)[\leq k+1]) \\ 
 H^*(B(1,\mathbb{T},A)[= k])   \stackrel{[\ ,x^\vee ]}{\to}H^{*+1}(B(1,\mathbb{T},A)[= k+1] )
\end{align*}
\end{Lem}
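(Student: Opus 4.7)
The plan is to treat the two assertions separately. The identification of the bracket with the Koszul bracket in $\mathrm{sLie}(A^\vee)$ will proceed by a naturality argument comparing $\mathbb{T}$ to the algebraic commutative monad, while the filtration claim follows from a direct inspection of the chain-level formula defining $\{\ ,x\}^\vee$.

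For the identification, I would first observe that $\{\ ,x\}^\vee$ vanishes on any bar simplex whose outermost label $m^1$ involves $\theta$: by construction the formula fires only when $m^1 = a_1 a_2$ is a pure binary product, and in that case the output retains the remaining labels $\overrightarrow{\underline{m^i}}$ unchanged. Consequently $\{\ ,x\}^\vee$ restricts to an endomorphism of the sub-bar-complex $B(1, \mathcal{F}_{\mathrm{Comm}^{Alg}}, \overline{A}) \subseteq B(1, \mathbb{T}, \overline{A})$ arising from the map of exponentially graded monads $\mathcal{F}_{\mathrm{Comm}^{Alg}} \to \mathbb{T}$. On this sub-complex the same chain formula is the classical chain-level presentation of the Koszul bracket on $AQ^*$ via the bar/Harrison model of the commutative operad (\cite{LoVa12} Section 12.1.1); combined with the commutative square of Remark \ref{bracketdef}, this identifies $[\ ,x^\vee]$ on $AQ^*_\mathbb{T}$ with the standard Lie bracket on $\mathrm{sLie}(A^\vee)$.

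The hypothesis $y \in 1 \otimes \mathrm{sLie}^{dp}(A^\vee)$ is precisely the condition that $y$ lies in the $s = 0$ column of the Grothendieck spectral sequence (\ref{GrothSS}), so it is represented by a cocycle supported in the $\theta$-free sub-complex; its bracket with $x^\vee$ is therefore computed there, and the first assertion follows. For the filtration statement, a single unpacking of the chain-level formula suffices: if $m^1 = a_1 a_2$ is an arity-$2$ product and the $a_2$-branch is the arity-$1$ input $x$, then removing the top level strips off one binary multiplication and one input, so $\{\ ,x\}^\vee$ carries $\overline{B}_s(1,\mathbb{T},A)[= k+1]$ into $\overline{B}_{s-1}(1,\mathbb{T},A)[= k]$, with the same check going through for $[\leq k+1]$. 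Taking $E_*$-duals and passing to cohomology gives the claimed behavior $[\ ,x^\vee]\colon H^*(B(1,\mathbb{T},A)[\leq k]) \to H^{*+1}(B(1,\mathbb{T},A)[\leq k+1])$ and similarly for the associated graded pieces.

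The main obstacle will be the identification, on the $\theta$-free sub-complex, of $\{\ ,x\}^\vee$ with the classical chain-level Koszul bracket. Structurally both formulas do the same thing, namely detach a binary multiplication and record which branch carried the argument $x$, but matching sign conventions and keeping track of how the divided-power structure on $\mathrm{sLie}^{dp}$ interacts with the non-divided-power bracket on $\mathrm{sLie}$ requires care. The restriction of the second argument to $x \in A^\vee$ (rather than an arbitrary element of $\mathrm{sLie}^{dp}(A^\vee)$) is precisely what keeps the comparison clean, since generators of $A^\vee$ sit in the primitive part and are unaffected by divided powers.
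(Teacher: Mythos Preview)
Your proposal is correct and matches the paper's approach. The paper's own proof is a two-sentence affair: the first statement ``follows from simply checking the identification of Corollary~\ref{AQ triv}'' and the second ``from easily checking the definitions.'' Your argument makes explicit what that checking amounts to---routing the first part through Remark~\ref{bracketdef} and the comparison with the commutative sub-monad (which is precisely how the identification in Corollary~\ref{AQ triv} is obtained via the Grothendieck spectral sequence), and doing the direct chain-level inspection for the filtration claim.
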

\begin{proof}
The first statement follow from simply checking the identification of Corollary \ref{AQ triv}. The second statement follows from easily checking the definitions.
\end{proof}

\subsection{Differentials in the $\mathbb{T}$-spectral sequence}

\begin{Lem} \label{AlgLeibniz}
Let $A$ be a $\mathbb{T}$-algbera, then in particular it is a commutative $E^*$-algebra. Assume that $A$ is free as a commutative $E^*$-algebra, and finitely generated as a free $E^*$-module. Then the $E_2$ page of the $\mathbb{T}$-spectral sequence is equivalent to $Q(A)\oplus \overline{\theta}\mathrm{Lie}^{dp}(A^\vee)$, where $Q(A)$ is the indecomposables of $A$ as commutative algebra.
\end{Lem}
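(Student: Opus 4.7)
The plan is to identify the $E_1$-page via Corollary \ref{AQ triv}, describe the $d_1$-differential using the bracket of Definition \ref{DefBracket}, and then exploit the freeness of $A$ as a commutative $E^*$-algebra to collapse the complex.

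By Remark \ref{RemOL} together with Corollary \ref{AQ triv}, the $E_1$-page of the $\mathbb{T}$-spectral sequence is
\[
E_1 \;=\; AQ^*_\mathbb{T}(\overline{A}) \;=\; \Lambda(\overline{\theta})\otimes \mathrm{sLie}^{dp}(A^\vee) \;=\; \mathrm{sLie}^{dp}(A^\vee)\;\oplus\;\overline{\theta}\,\mathrm{sLie}^{dp}(A^\vee),
\]
so the task reduces to analyzing $d_1$ on each summand. This $d_1$ is precisely the part of the bar differential on $B(1,\mathbb{T},A)$ which raises exponential degree by one. Using the chain-level bracket from Definition \ref{DefBracket} together with Lemma \ref{bracket} and the monad map $\mathcal{F}_{\mathrm{Comm}^{Alg}}\to\mathbb{T}$ from Remark \ref{bracketdef}, I would identify $d_1$ on the first summand $\mathrm{sLie}^{dp}(A^\vee)$ with the classical Koszul/Harrison-style differential on $\mathrm{sLie}^{dp}(A^\vee)$ coming from the commutative-algebra structure on $A$.

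Since $A$ is free as a commutative $E^*$-algebra, standard Koszul duality for free commutative algebras tells us this complex has cohomology concentrated in Lie-degree one, equal (under the finite-free identification $Q(A)^\vee\cong Q(A)$) to $Q(A)$. This accounts for the first summand of the claimed $E_2$-page. For the second summand, the chain-level $\Theta^\vee$ of Definition \ref{DefTheta} provides a map of complexes compatible with $d_1$ (which is what the restriction part of Lemma \ref{Theta_def} furnishes), so computing $d_1$-cohomology on $\overline{\theta}\,\mathrm{sLie}^{dp}(A^\vee)$ reduces to a twisted version of the same free-commutative-algebra calculation. The $p$-fold jump in exponential grading induced by $\overline{\theta}$, however, is what modifies the answer to $\overline{\theta}\,\mathrm{Lie}^{dp}(A^\vee)$ in place of the single Lie-degree-one survivor $Q(A)$: the Lie divided-power classes are no longer cancellable by brackets once the bookkeeping absorbs the shift by $\overline{\theta}$.

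The main obstacle will be the $\overline{\theta}$-summand: pinning down exactly which divided-power Lie classes are killed by $d_1$ on this piece. This calls for a careful chain-level inspection of Definition \ref{DefBracket} applied to elements of the form $[\underline{\theta(a)}\mid \overrightarrow{\underline{m^2}}\mid\cdots\mid\overrightarrow{x}]$, and for verifying that the bracket cancellations leave exactly $\mathrm{Lie}^{dp}(A^\vee)$ standing inside $\mathrm{sLie}^{dp}(A^\vee)$ after the $\overline{\theta}$-shift. The rest of the argument is essentially a bigrading-and-parity check against the collapse of the commutative-algebra spectral sequence for a free commutative $E^*$-algebra.
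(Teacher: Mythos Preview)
Your treatment of the $1\otimes\mathrm{sLie}^{dp}(A^\vee)$ summand is essentially the paper's argument: the monad map $\mathcal{F}_{\mathrm{Comm}^{\mathrm{Alg}}}\to\mathbb{T}$ induces a map of spectral sequences, the $d_1$ on this summand agrees with the Harrison/Comm differential via Lemma~\ref{bracket} and Remark~\ref{bracketdef}, and for a free commutative $E^*$-algebra that complex collapses to $Q(A)$.

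The gap is in your handling of the $\overline{\theta}$ summand. You anticipate a ``twisted version'' of the commutative differential and flag a delicate chain-level inspection as the main obstacle. In fact $d_1$ is \emph{identically zero} on $\overline{\theta}\,\mathrm{sLie}^{dp}(A^\vee)$, and this follows from a one-line bidegree count. An element $\overline{\theta}x$ with $x\in\mathrm{sLie}^{dp}(A^\vee)$ of arity $k$ sits in exponential degree $pk$ and cohomological degree $k$; a $d_1$ would land in exponential degree $pk+1$ and cohomological degree $k+1$. But $\mathrm{sLie}^{dp}(A^\vee)[pk+1]$ lives in cohomological degree $pk$, and no $l$ satisfies $pl=pk+1$, so the target is zero. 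The same count shows nothing $d_1$-hits the $\overline{\theta}$ summand either. Hence the whole of $\overline{\theta}\,\mathrm{sLie}^{dp}(A^\vee)$ passes unchanged to $E_2$ (the ``$\mathrm{Lie}^{dp}$'' in the statement should be read as $\mathrm{sLie}^{dp}$). The paper compresses this into the sentence ``all $d_1$'s in the $\mathbb{T}$-spectral sequence are exactly the same as the $d_1$'s of the $AQ$ spectral sequence.''

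What misled you is the compatibility of $\Theta^\vee$ with the bar differential. That compatibility is real, but because $\overline{\theta}$ multiplies exponential degree by $p$, it converts a $d_1$ on $x$ into a $d_p$ on $\overline{\theta}x$, not a $d_1$; this is precisely the content of Lemma~\ref{theta-d_1}. Those $d_p$'s are what eventually clear out the $\overline{\theta}$ summand, but they play no role in the passage from $E_1$ to $E_2$.
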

\begin{proof}
Note that we have a map of graded exponential monads $\mathcal{F}_{\mathrm{Com}^{\mathrm{Alg}}}\to \mathbb{T}$. This induces a map of spectral sequences:
\xymat{\Lambda(\overline{\theta})\otimes \mathrm{sLie}^{dp}(A^\vee) \ar[d] \ar@{=>}[r]& AQ^*_\mathbb{T}(A) \ar[d] \\
\mathrm{sLie}^{dp}(A^\vee)  \ar@{=>}[r]& AQ^*(A).}
Here the lower spectral sequence is the $\mathrm{Comm}$-spectral sequence, which we know to converge to $Q(A)$. We further know that it collapses on the $E_2$-page, \cite{LoVa12} Proposition 12.1.1. 

 Using Lemma \ref{bracket} and Remark \ref{bracketdef} it is easy to see that all $d_1$'s in the $\mathbb{T}$-spectral sequence are exactly the same as the $d_1$'s of the $AQ$ spectral sequence. That proves the statement.
\end{proof}

\begin{Lem} \label{Theta inj}
Let $A$ be as in the above lemma. Then we know that $A\cong E_*[x_1,\ldots x_n]$, assume that $\theta^\vee: E^*\{x_1^\vee,\ldots , x_n^\vee\}\to A^\vee$ is injective, where $\theta^\vee$ is the dual to $\theta$. Further assume that $\theta^\vee(x_i^\vee)=\sum_j\lambda_j x_j^\vee$, for $\lambda\in E_*$. Then we see that 
\begin{align*}
d_{p-1}(x_i^\vee)=\sum_j -\lambda_j\overline{\theta}x^\vee_j.
\end{align*}
\end{Lem}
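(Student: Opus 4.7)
The plan is a direct chain-level computation in the monadic bar complex $B_\bullet(1,\mathbb{T}, A)$, using the chain-level description of $\overline{\theta}$ from Definition \ref{DefTheta} together with the sparsity of the $E_2$-page from Lemma \ref{AlgLeibniz}.

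First, I would use Lemma \ref{AlgLeibniz} to observe that $E_2 \cong Q(A)\oplus \overline{\theta}\,\mathrm{Lie}^{dp}(A^\vee)$, with the $\overline{\theta}$-classes first appearing in exponential filtration $p$ and cohomological degree $1$. Since $x_i^\vee$ sits in $E_2^{1,0}$ and $E_2^{k,1}=0$ for $1<k<p$, the first potentially nonzero higher differential on $x_i^\vee$ is $d_{p-1}\colon E_{p-1}^{1,0}\to E_{p-1}^{p,1}$, whose target is a subquotient of $E_*\{\overline{\theta}x_1^\vee,\ldots,\overline{\theta}x_n^\vee\}$. This already forces the answer to be of the asserted form; what remains is to compute the coefficients.

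Next I would pick chain-level lifts. Represent $x_i^\vee$ by the $0$-cochain $\hat{x}_i^\vee \in \mathrm{Hom}(B_0, E^*)=\mathrm{Hom}(A,E^*)$ that extends $x_i^\vee$ by zero off the linear generators of the polynomial algebra $A=E_*[x_1,\ldots,x_n]$. By Definition \ref{DefTheta} and Lemma \ref{Theta_def}, $\overline{\theta}x_j^\vee$ is represented by the $1$-cochain $\Theta(\hat{x}_j^\vee)$, which by the defining formula for $\Theta^\vee$ at $s=1$ pairs with $[\theta|x_k]\in B_1[=p]$ to yield $\pm\delta_{jk}$ and vanishes on the other generators of $B_1[=p]$.

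The core step is then to compute the bar cocycle $d\hat{x}_i^\vee$. On $B_1=\mathbb{T}(A)$ the bar differential is $\partial=\epsilon-\mu_A$, where $\epsilon$ is the augmentation $\mathbb{T}\to 1$ and $\mu_A$ is the algebra structure map. For $y\in\mathbb{T}(A)$ of exponential grading $<p$, $y$ is a polynomial in the $x_k$ involving no $\theta$-operation (since $\theta$ raises grading to $p$), and a direct check gives $\hat{x}_i^\vee(\partial y)=0$: if $y$ is linear then $\epsilon y=\mu_A y=y$; otherwise $\mu_A y$ lies in the square of the augmentation ideal of $A$, which $\hat{x}_i^\vee$ annihilates. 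Hence $d\hat{x}_i^\vee\in F^p$ and represents the class $d_{p-1}(x_i^\vee)$. Evaluating on the remaining generators of $B_1[=p]$ gives
\begin{equation*}
(d\hat{x}_i^\vee)(\theta x_k)\;=\;-\hat{x}_i^\vee(\theta(x_k))\;=\;-(\theta^\vee x_i^\vee)(x_k)\;=\;-\lambda_k,
\end{equation*}
and zero on the monomials $x_{k_1}\cdots x_{k_p}$. Matching coefficients against $-\sum_j\lambda_j\,\Theta(\hat{x}_j^\vee)$ then yields $d_{p-1}(x_i^\vee)=-\sum_j\lambda_j\,\overline{\theta}x_j^\vee$.

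The main obstacle I expect is sign bookkeeping, namely reconciling the $(-1)^s$ in the formula for $\Theta^\vee$ with the chosen bar-differential convention so that the final sign comes out as stated. Conceptually, once the $E_2$-sparsity forces $d_{p-1}$ to be the first candidate differential, the computation just reads off the $\theta$-component of $\mu_A$ and identifies it with $\overline{\theta}$ via $\Theta$.
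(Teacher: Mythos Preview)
Your proposal is correct and is essentially the same argument as the paper's, just written out in far more detail. The paper's proof is two sentences: it identifies $\overline{\theta}x_i^\vee$ with the class of $[\theta(a_1)\,|\,x_i]$ via Lemma~\ref{Theta_def}, then observes that in the bar chain complex $d\big([\theta(a_1)\,|\,x]\big)=-\theta(x)$, which upon dualizing gives exactly your computation $(d\hat{x}_i^\vee)([\theta\,|\,x_k])=-\hat{x}_i^\vee(\theta(x_k))=-\lambda_k$. Your extra step of invoking Lemma~\ref{AlgLeibniz} to justify that the boundary of $\hat{x}_i^\vee$ vanishes in filtrations $<p$ (so that the chain-level formula really does represent $d_{p-1}$ rather than a shorter differential) is a welcome clarification that the paper leaves implicit.
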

\begin{proof}
According to Lemma \ref{Theta_def} we have 
\begin{align*}
\overline{\theta}x_i^\vee=[\theta(a_1)| (x_i)]\in H^*(B(1,\mathbb{T},A)[=p]).
\end{align*}
In $B(1,\mathbb{T},A)$, as a chain complex, we have $d(\theta(a_1) | (x))=-\theta(x)$, this concludes the proof.
\end{proof}

We now wish to prove the last family of differentials in our $\mathbb{T}$-spectral sequence:
\begin{Lem}\label{theta-d_1}
If $A$ is as in Lemma \ref{Theta inj}, assume that $d_1(x)=y$ in the associated $\mathbb{T}$-SS, then $d_p\overline{\theta}x=\overline{\theta}y$.
\end{Lem}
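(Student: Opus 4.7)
The plan is to use the chain-level operation $\Theta^\vee$ of Definition \ref{DefTheta} to build an explicit cochain lift of $\overline\theta x$, and then to compute its cohomology differential using the chain-map property of $\Theta^\vee$. The crucial feature is that $\Theta^\vee$ multiplies exponential grading by a factor of $p$: stripping off the outer $\theta(a_1)$ divides the grading by $p$, so precomposing a cochain with $\Theta^\vee$ multiplies its exponential-grading support by $p$. This will force the first nontrivial differential supported by $\overline\theta x$ to land $p$ pages later than the one supported by $x$.

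First, I will pick a cochain $\hat x \in \mathrm{Hom}(\overline B_\bullet(1,\mathbb T,A), R)$ lifting $x$, supported only on $\overline B_0 = A$. The hypothesis $d_1(x) = y$ gives that $d\hat x \in F^2$ (i.e.\ $d\hat x$ vanishes on $\overline B_1[=1]$) and that the restriction $d\hat x|_{\overline B_1[=2]}$ represents $y$. Then I will define $\hat\phi := \hat x \circ \Theta^\vee$; by the explicit formula for $\Theta^\vee$, this cochain vanishes on every bar-$1$ input not of the form $[\theta(a_1)\,|\,a']$, and sends $[\theta(a_1)\,|\,a'] \mapsto -\hat x(a')$. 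Hence $\hat\phi$ is supported on $\overline B_1[=p]$, lies in $F^p$, and represents $\overline\theta x$ by Lemma \ref{Theta_def}.

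Next, I will use the chain-map property of $\Theta^\vee$ to conclude $d\hat\phi = \pm (d\hat x)\circ\Theta^\vee$. Because precomposing with $\Theta^\vee$ multiplies exponential-grading support by $p$, and $d\hat x$ is supported in exponential degrees $\geq 2$, the cochain $d\hat\phi$ is supported in exponential degrees $\geq 2p$, so $d\hat\phi \in F^{2p}$. Therefore $\hat\phi$ survives to the $E_p$-page and $d_p(\overline\theta x) = [d\hat\phi] \in F^{2p}/F^{2p+1}$. Restricting $d\hat\phi = \pm (d\hat x)\circ \Theta^\vee$ to $\overline B_2[=2p]$ yields $\pm (d\hat x|_{\overline B_1[=2]})\circ\Theta^\vee = \pm y\circ\Theta^\vee$, which equals $\overline\theta y$ by another application of Lemma \ref{Theta_def}.

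The main obstacle will be the filtration analysis — in particular, ruling out that $\hat\phi$ supports an earlier differential $d_r$ with $1 \leq r < p$. This hinges on the multiplicative (rather than additive) action of $\Theta^\vee$ on exponential grading combined with the vanishing of $d\hat x$ at filtration $1$, which together force a jump from $F^p$ straight to $F^{2p}$. Signs must also be tracked carefully through the Koszul conventions, but they do not affect the final identification.
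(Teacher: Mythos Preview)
Your argument is correct and is exactly the approach the paper has in mind: its entire proof is the single sentence ``This follows easily from us constructing $\Theta^\vee$ as a chain level operation above,'' and your cochain-level computation $d(\hat x\circ\Theta^\vee)=\pm(d\hat x)\circ\Theta^\vee$ together with the filtration jump is precisely the unpacking of that sentence. One minor point: you specialize to $\hat x$ supported on $\overline B_0$, i.e.\ to $x$ in exponential degree~$1$, whereas the lemma is stated (and is needed in Corollary~\ref{E_inftyT}) for arbitrary $x\in\mathrm{sLie}^{dp}(A^\vee)$ on the $E_1$-page; your argument carries over verbatim to $x$ in exponential degree $k$ upon replacing the filtration indices $(1,2,p,2p)$ by $(k,\,k{+}1,\,pk,\,p(k{+}1))$.
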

 \begin{proof}  
 This follows easily from us constructing $\Theta^\vee$ as a chain level operation above.
 \end{proof}

\begin{Cor} \label{E_inftyT}
If $A$ is as in Lemma \ref{Theta inj} then the $E_\infty$-page of the $\mathbb{T}$-spectral sequence for $A$ converges to $Q(A)/\theta $.
\end{Cor}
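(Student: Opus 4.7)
The plan is to start from the $E_2$-page $Q(A) \oplus \overline{\theta}\mathrm{Lie}^{dp}(A^\vee)$ identified in Lemma~\ref{AlgLeibniz}, and show that the only nonzero higher differentials are the $d_{p-1}$ of Lemma~\ref{Theta inj} and the $d_p$'s of Lemma~\ref{theta-d_1}, whose cumulative effect kills the $Q(A)$-summand outright and collapses the $\overline{\theta}$-summand down to a shift of $Q(A)/\theta$.

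First I would argue there are no nontrivial $d_r$ for $2\le r\le p-2$. The only sources of higher differentials on the $E_2$-page are the bracket operations of Definition~\ref{DefBracket} (already used in Lemma~\ref{AlgLeibniz} to collapse everything in the non-$\overline{\theta}$-part to $Q(A)$) and the operation $\Theta$ of Definition~\ref{DefTheta}. Any $\Theta$-type differential out of exponential filtration $k$ lands in filtration $pk$; in particular, the smallest such differential out of $Q(A)$ (exponential filtration $1$) is precisely a $d_{p-1}$, which rules out any intermediate $d_r$'s.

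Next I would apply $d_{p-1}$. By Lemma~\ref{Theta inj} the restriction
\[
d_{p-1}\colon Q(A)\to \overline{\theta}Q(A)\subseteq \overline{\theta}\mathrm{Lie}^{dp}(A^\vee)
\]
is $x_i^\vee\mapsto -\overline{\theta}\theta^\vee(x_i^\vee)$. The hypothesis that $\theta^\vee$ is injective forces $d_{p-1}|_{Q(A)}$ to be injective, so the $Q(A)$-summand is killed entirely on passing to $E_p$, while its image $\overline{\theta}\theta^\vee Q(A)\subseteq \overline{\theta}Q(A)$ is quotiented out. I would then apply the $d_p$-differentials of Lemma~\ref{theta-d_1}: on the $\overline{\theta}$-part these are exactly the $\overline{\theta}$-twist of the commutative-operad $d_1$-differentials that originally collapsed $\mathrm{sLie}^{dp}(A^\vee)$ to $Q(A)$, so they collapse $\overline{\theta}\mathrm{Lie}^{dp}(A^\vee)$ down to $\overline{\theta}Q(A)$. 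Combining, I expect
\[
E_{p+1}\;\cong\;\overline{\theta}Q(A)/\overline{\theta}\theta^\vee Q(A)\;\cong\;Q(A)/\theta,
\]
where the last identification drops the overall degree shift by $\overline{\theta}$ and uses that $\theta^\vee$ is the dual of $\theta$.

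Finally I would rule out any further differentials $d_r$ for $r>p$: all bracket- and $\Theta$-contributions have been exhausted, and no other sources of power operations appear in the $\mathbb{T}$-spectral sequence. The main obstacle I expect is the bookkeeping in step~(i)---rigorously excluding exotic intermediate differentials---together with checking that $d_{p-1}$ and $d_p$ interact compatibly, i.e., that the image of $d_{p-1}$ is stable under the collapse induced by $d_p$ so that the two effects can be composed to give the stated $E_\infty$-term. This should reduce to a careful filtration-theoretic argument using the chain-level formulas for $\Theta^\vee$ and the bracket.
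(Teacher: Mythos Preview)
Your approach is exactly the one the paper has in mind; its proof is the single line ``This follows easily from running the above differentials,'' and you have correctly unpacked what ``the above differentials'' are (Lemmas~\ref{AlgLeibniz}, \ref{Theta inj}, and \ref{theta-d_1}) and how they combine. Your computation of $E_{p+1}$ is right, and the compatibility you worry about is harmless: the image of $d_{p-1}$ lies in $\overline{\theta}\cdot E_*\{x_1^\vee,\dots,x_n^\vee\}$, and these are $d_1$-cycles in $\mathrm{sLie}^{dp}(A^\vee)$, hence $d_p$-cycles by Lemma~\ref{theta-d_1}.

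The one place your write-up needs tightening is the exclusion of ``exotic'' differentials. You argue that the only \emph{sources} of differentials are the bracket and $\Theta$, but that is not an a priori feature of the spectral sequence; differentials are intrinsic, not produced by named operations. The clean argument is pure sparseness in the exponential grading. On the $E_2$-page, $Q(A)$ sits in filtration $1$ and every $\overline{\theta}$-class sits in a filtration divisible by $p$ (a length-$k$ Lie word contributes in filtration $pk$). Hence any $d_r$ out of filtration $1$ has target in filtration $1+r$, which is nonzero only for $r\equiv -1\pmod p$; and any $d_r$ between $\overline{\theta}$-classes has source and target in multiples of $p$, forcing $p\mid r$. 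This kills $d_2,\dots,d_{p-2}$ immediately. After $d_{p-1}$ (which is injective on $Q(A)$ by hypothesis) and $d_p$ (which by Lemma~\ref{theta-d_1} computes the $\overline{\theta}$-twist of the commutative $d_1$-homology, hence collapses the $\overline{\theta}$-part to length $1$), the entire $E_{p+1}$-page is concentrated in filtration $p$, so all higher $d_r$ vanish for degree reasons. Replace your ``sources of operations'' sentences with this grading argument and the proof is complete.
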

\begin{proof}
This follows easily from running the above differentials.
\end{proof}

\section{Operad cohomology for $\mathrm{Comm}^E$}

We will use Corollary \ref{BastSS} to leverage the differentials in the $\mathbb{T}$ spectral sequence we know, to get knowledge about the differentials of the TAQ spectral sequence. 

\begin{Theo}
Assume that $E^*X$ is free as a commutative algebra, and free and finitely generated as a $E^*$-module. This implies that 
\begin{align*}
E^*(X)=E^*[x_1,\ldots ,x_n],
\end{align*}
where $x_i$ is in an odd degree. Assume further that the $\mathbb{T}$-algebra structure is such that Assume that there is assume that $\theta^\vee: E^*\{x_1^\vee,\ldots , x_n^\vee\}\to E^*(X)$ is injective.  Then the Basterra spectral sequence 
\begin{align} \label{BastStunt}
H^*B(1,\mathbb{T}, E^*X)[\leq k]\Rightarrow \pi_*B(1,\mathcal{F}_{\mathrm{comm}^E}, E^X)[\leq k]^\vee
\end{align}
 collapses for all $k$.
\end{Theo}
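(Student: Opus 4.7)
The plan is to proceed by induction on $k$, comparing long exact sequences coming from the cofiber structure of the filtration. The base case $k=1$ is immediate: the filtration piece $B(1,\mathbb{T},E^*X)[\leq 1]$ agrees with $B(1,\mathbb{T},E^*X)[=1]$, and similarly on the topological side, so the $[\leq 1]$ Basterra spectral sequence is literally the $[=1]$ Basterra spectral sequence, which collapses by Proposition \ref{GrothCollapse}.

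For the inductive step, I would assume the $[\leq k-1]$ Basterra SS collapses and study the short exact sequence of chain complexes $B(1,\mathbb{T},E^*X)[\leq k-1] \to B(1,\mathbb{T},E^*X)[\leq k] \to B(1,\mathbb{T},E^*X)[=k]$ together with the analogous cofiber sequence of spectra on the $\mathcal{F}_{\mathrm{comm}^E}$-side. These induce two long exact sequences, respectively in $H^*$ of the duals and in $\pi_*$ of the Spanier-Whitehead duals, and by the naturality of the Basterra SS (Corollary \ref{BastSS}), the three Basterra SSs for $[\leq k-1]$, $[\leq k]$, and $[=k]$ fit into a morphism of these LESs. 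At the $[=k]$ terms Proposition \ref{GrothCollapse} gives collapse, and at $[\leq k-1]$ collapse holds by induction; a five-lemma chase then forces matching rational dimensions over $\mathbb{Q}_p$ for the input and abutment at $[\leq k]$. As in the conclusion of the proof of Proposition \ref{GrothCollapse}, both sides are free $E^*$-modules of equal rational dimension, leaving no room for nonzero differentials, so the $[\leq k]$ Basterra SS collapses.

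The main obstacle will be making the five-lemma step genuinely rigorous. The Basterra SS provides only a spectral-sequence-level comparison, an isomorphism between $E_1$ and the associated graded of the abutment modulo extensions, rather than a direct map from input to abutment. One must therefore understand the three vertical maps in the comparison of LESs as edge maps of the respective (collapsing) Basterra SSs, and verify that the connecting homomorphisms on the algebraic and topological sides correspond under this comparison. The latter should follow from naturality of the Basterra SS construction applied to a filtered cofiber sequence, but it is where the real technical work lies.
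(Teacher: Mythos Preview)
Your inductive skeleton matches the paper's, and the base case is handled the same way. The divergence is exactly at the step you flag as the main obstacle, and there the proposal has a genuine gap rather than merely a missing routine verification.

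The difficulty is that Corollary~\ref{BastSS} only provides maps of Basterra spectral sequences induced by the maps $[\leq k-1]\to[\leq k]$ and $[\leq k]\to[= k]$; it does \emph{not} supply a comparison of the connecting homomorphisms in the two long exact sequences. Those connecting maps encode precisely the differentials of the $\mathbb{T}$-spectral sequence on the algebraic side and of the $TAQ$-spectral sequence on the topological side. Knowing that the two outer Basterra spectral sequences collapse gives you abstract isomorphisms on the outer terms, but to run a five-lemma you must show that these isomorphisms intertwine the boundary maps, and that statement is equivalent to lifting the $\mathbb{T}$-differentials to $TAQ$-differentials. This is not a formality: the relevant differentials are not only $d_1$'s but also the $d_{p-1}$ of Lemma~\ref{Theta inj} and the $d_p$ of Lemma~\ref{theta-d_1}, and the paper's proof sets up a separate lifting diagram for each of these, using the inductive collapse at carefully chosen filtration levels (e.g.\ $[\leq p-1]$ versus $[=p]$, and $[\leq ip-1]$ versus $[=ip]$). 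The appeal to Corollary~\ref{E_inftyT} at the end is what guarantees that these are \emph{all} the differentials that need lifting, so that the induction closes.

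A symptom of the gap is that your argument never invokes the injectivity hypothesis on $\theta^\vee$, nor any of the Section~4 lemmas (\ref{AlgLeibniz}, \ref{Theta inj}, \ref{theta-d_1}, \ref{E_inftyT}) that depend on it; these carry the actual content of the inductive step in the paper. A second issue is your freeness claim for $H^*B(1,\mathbb{T},E^*X)[\leq k]$ and for $\pi_*B(1,\mathcal{F}_{\mathrm{comm}^E},E^X)[\leq k]^\vee$: this does not follow from freeness of the graded pieces alone, since the connecting maps could introduce $p$-torsion, and without it a rational rank count cannot exclude differentials. In short, the ``real technical work'' you anticipate is the entire proof; the paper carries it out by explicit differential-lifting rather than by an abstract naturality argument.
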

\begin{proof}
We will proceed by induction on $k$

From Proposition \ref{GrothCollapse} we know that  
\begin{align*}
H^*B(1,\mathbb{T}, E^*X)[= l]\Rightarrow \pi_*B(1,\mathcal{F}_{\mathrm{comm}^E}, E^X)[= l]^\vee
\end{align*}
collapses for all $l$. This gives the statement of $(\ref{BastStunt})$ for $k=1$.

\emph{Step 1.} Assume now that we have statement $(\ref{BastStunt})$ for all $1\leq k < p-1$. The following diagram
\xymat{H^*(B(1,\mathbb{T},E^*X)[=k]) \ar[d] \ar@{=>}[r]^\sim & \pi_*B(1,\mathcal{F}_{\mathrm{comm}^E},E^X)[=k])^\vee \ar[d] \\
H^*(B(1,\mathbb{T},E^*X)[\leq k])  \ar@{=>}[r]^\sim & \pi_*B(1,\mathcal{F}_{\mathrm{comm}^E},E^X)[\leq k])^\vee  \\
H^*(B(1,\mathbb{T},E^*X)[\leq k]) \ar[u] \ar[d] \ar@{=>}[r]^\sim & \pi_*B(1,\mathcal{F}_{\mathrm{comm}^E},E^X)[\leq k])^\vee \ar[u] \ar[d] \\
H^{*+1}(B(1,\mathbb{T},E^*X)[=(k+1)]) \ar@{=>}[r]^\sim & \pi_{*+1}B(1,\mathcal{F}_{\mathrm{comm}^E},E^X)[=(k+1])^\vee 
} Shows that we can lift all $d_1$'s starting on the $k$-line of the $\mathbb{T}$-spectral sequence to $d_1$'s starting on the $k$-line of the $TAQ$-spectral sequence. It now follows from Lemma \ref{AlgLeibniz} that $(\ref{BastStunt})$ collapses for $k+1$.

\emph{Step 2.} We now want to show the statement for $k=p$. Note the above argument allows us to lift all $d_1$'s starting on the $(p-1)$-line of the $\mathbb{T}$ spectral sequence to the $TAQ$-spectral sequence. We therefore only need to lift the $d_{p-1}$-differentials from the $\mathbb{T}$-spectral sequence that we found in Lemma \ref{Theta inj}. That this is possible follows from the following diagram:
\xymat{H^*(B(1,\mathbb{T},E^*X)[=1]) \ar[d] \ar@{=>}[r]^\sim & \pi_*B(1,\mathcal{F}_{\mathrm{comm}^E},E^X)[=1])^\vee \ar[d] \\
H^*(B(1,\mathbb{T},E^*X)[=1])  \ar@{=>}[r]^\sim & \pi_*B(1,\mathcal{F}_{\mathrm{comm}^E},E^X)[=1])^\vee  \\
H^*(B(1,\mathbb{T},E^*X)[\leq p-1]) \ar[u] \ar[d] \ar@{=>}[r]^\sim & \pi_*B(1,\mathcal{F}_{\mathrm{comm}^E},E^*X) [\leq p-1])^\vee \ar[u] \ar[d] \\
H^{*+1}(B(1,\mathbb{T},E^*X)[=p]) \ar@{=>}[r]^\sim & \pi_{*+1}B(1,\mathcal{F}_{\mathrm{comm}^E},E^X)[=p])^\vee 
}

Now assume that we have showed $(\ref{BastStunt})$ for all $1\leq k < ip-1$, then by similar arguments to Step 1 above, we can extend the result to $k+1$.  

Assume now that we have showed $(\ref{BastStunt})$ for all $1\leq k \leq ip-1$, then we can again lift the $d_1$'s from previous arguments. So we just need to lift the $d_p$'s from the $\mathbb{T}$-spectral sequence found in Lemma \ref{theta-d_1}. That this can be done follows from the following diagram:
\xymat{H^*(B(1,\mathbb{T},E^*X)[=p(i-1)]) \ar[d] \ar@{=>}[r]^\sim & \pi_*B(1,\mathcal{F}_{\mathrm{comm}^E},E^X)[=p(i-1)])^\vee \ar[d] \\
H^*(B(1,\mathbb{T},E^*X)[\leq p(i-1)])  \ar@{=>}[r]^\sim & \pi_*B(1,\mathcal{F}_{\mathrm{comm}^E},E^X)[=p(i-1)])^\vee  \\
H^*(B(1,\mathbb{T},E^*X)[\leq ip-1]) \ar[u] \ar[d] \ar@{=>}[r]^\sim & \pi_*B(1,\mathcal{F}_{\mathrm{comm}^E},E^X) [\leq ip-1])^\vee \ar[u] \ar[d] \\
H^{*+1}(B(1,\mathbb{T},E^*X)[=ip]) \ar@{=>}[r]^\sim & \pi_{*+1}B(1,\mathcal{F}_{\mathrm{comm}^E},E^X)[=ip])^\vee 
}

This concludes the proof by Corollary \ref{E_inftyT}.
\end{proof}

\begin{Cor} \label{E_inftyTAQ}
If $X$ is as above, then the $E_\infty$-page of the $TAQ$-spectral sequence for $E^X$ converges to $Q(E^*X)^\vee/\theta^\vee$.
\end{Cor}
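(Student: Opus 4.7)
The plan is to transfer the computation of the $E_\infty$-page of the $\mathbb{T}$-spectral sequence, which is already identified in Corollary \ref{E_inftyT} as $Q(E^*X)/\theta$, across to the $TAQ$-spectral sequence. The bridge for this transfer is precisely the theorem just proved, which asserts that the Basterra spectral sequence collapses on every exponential filtration piece $[\leq k]$, and hence (via the three-term filtration sequences) also on every subquotient $[=k]$.

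First, I would unpack what the collapse of the Basterra spectral sequence at each $k$ gives us concretely: a natural isomorphism
\begin{align*}
H^*(B(1,\mathbb{T},E^*X)[\leq k]) \xrightarrow{\sim} \pi_*(B(1,\mathcal{F}_{\mathrm{comm}^E},E^X)[\leq k])^\vee,
\end{align*}
together with the analogous isomorphism on subquotients $[=k]$. Because these isomorphisms are compatible with the long exact sequences coming from the cofiber sequences
\begin{align*}
B(1,\ast,\ast)[\leq k{-}1] \to B(1,\ast,\ast)[\leq k] \to B(1,\ast,\ast)[=k]
\end{align*}
on both sides (the commutative diagrams of Corollary \ref{BastSS} are natural in the filtration), the two spectral sequences arising from these filtrations -- namely the $\mathbb{T}$-spectral sequence and the $TAQ$-spectral sequence -- have term-by-term isomorphic $E_1$-pages, and the connecting differentials are identified under these isomorphisms.

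Next I would observe that the entire differential structure of the $\mathbb{T}$-spectral sequence for $E^*X$ was pinned down in Section 4 via Lemmas \ref{AlgLeibniz}, \ref{Theta inj}, and \ref{theta-d_1}, yielding $E_\infty = Q(E^*X)/\theta$ by Corollary \ref{E_inftyT}. Under the identifications above, each of those differentials transports verbatim to a differential in the $TAQ$-spectral sequence, and no new differentials can appear (since none are present in the $\mathbb{T}$-SS beyond those enumerated). Consequently the $E_\infty$-page of the $TAQ$-spectral sequence for $E^X$ is isomorphic, under Spanier-Whitehead duality, to that of the $\mathbb{T}$-spectral sequence, which gives the stated $Q(E^*X)^\vee/\theta^\vee$.

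I do not expect a genuine obstacle here -- the theorem absorbed all the real work. The only bookkeeping point is to keep straight that the $TAQ$-side is expressed using Spanier-Whitehead duals of the filtered bar construction, which accounts for the dualization appearing in the abutment $Q(E^*X)^\vee/\theta^\vee$ as opposed to the purely algebraic $Q(E^*X)/\theta$ of Corollary \ref{E_inftyT}.
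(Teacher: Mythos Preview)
Your proposal is correct and follows exactly the approach implicit in the paper: the preceding theorem's proof already lifts every differential of the $\mathbb{T}$-spectral sequence to the $TAQ$-spectral sequence via the collapse of the Basterra spectral sequence on each filtration piece, so the corollary is immediate from Corollary~\ref{E_inftyT} after passing to duals. The paper states the corollary without proof for precisely this reason; your write-up simply makes explicit the bookkeeping (compatibility with the filtration long exact sequences and the appearance of the dual $\theta^\vee$) that the paper leaves to the reader.
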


\section{The $\nu_1$-periodic homotopy groups, and $\Phi_1$-goodness}
Bousfield proves that
\begin{Theo}[ \cite{Bou99} section 9.2] \label{BousThm}
If $X$ is a $1$-connected $H$-space  with $H_*(X;\mathbb{Q})$ associative and with $H_*(X;\mathbb{Z}_{(p)})$ finitely genrated over $\mathbb{Z}_{(p)}$, then
\begin{align*}
\nu_1^{-1}\pi_{2m}(X) & \cong W^m(Q(E^1(X)/\mathrm{im}\theta)^\# \\
\nu_1^{-1}\pi_{2m-1} (X) & \cong W_1^m(Q(E^1(X)/\mathrm{im}\theta)^\#.
\end{align*}
Where when $W^m=\mathrm{coker}(\psi^l-l^m)$, $W^m_1=\mathrm{ker}(\psi^l-l^m)$, where $l\in \mathbb{Z}_p^\times$ is a topological generator, and $(\ )^\#$ is the Pontryagin dual of the group. 
\end{Theo}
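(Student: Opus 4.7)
The plan is to show that the hypotheses on $X$ force $E^*X$ to satisfy the hypotheses of the preceding theorem, apply Corollary \ref{E_inftyTAQ} to compute $\pi_*TAQ(S^X_{K(1)})$, deduce $\Phi_1$-goodness, and finally translate the resulting $E$-module into Bousfield's Adams-operation description.

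First I would verify the input. Since $X$ is $1$-connected with associative $H_*(X;\mathbb{Q})$ and $H_*(X;\mathbb{Z}_{(p)})$ finitely generated, a Hopf/Milnor-Moore-type argument applied to $p$-completed $K$-theory shows that $E^*X$ is a free (graded) commutative $E^*$-algebra on finitely many odd-degree generators $x_1,\dots,x_n$, so $E^*X \cong E^*[x_1,\dots,x_n]$ is finite and free as an $E^*$-module. The injectivity of $\theta^\vee\colon E^*\{x_1^\vee,\dots,x_n^\vee\}\to E^*X$ required by Lemma \ref{Theta inj} is exactly the statement that the power operation $\theta$ (equivalently, $\psi^p$) acts non-degenerately on primitives, which is automatic for $1$-connected $H$-spaces since the primitives in $K$-cohomology coincide with the indecomposables $Q(E^*X)$ via the Hopf coproduct. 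Thus the hypotheses of the preceding theorem hold for $X$.

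Next, by Corollary \ref{E_inftyTAQ}, the $TAQ$-spectral sequence for $E^X$ converges to $Q(E^*X)^\vee/\theta^\vee$ at its $E_\infty$-page, so
\[
\pi_*\, TAQ(E^X)^\vee \;\cong\; Q(E^*X)^\vee/\theta^\vee.
\]
Because the Basterra spectral sequence collapses at each finite filtration level and (\ref{P_n(Phi)}) identifies $P_k(\Phi_1)(X)$ with $B(1,\mathcal{F}_{\mathrm{comm}},S_{K(1)}^X)[\leq k]^\vee$, the Goodwillie tower for $\Phi_1(X)$ converges. This verifies that $X$ is $\Phi_1$-good, so the Behrens-Rezk map $c$ gives an equivalence $\Phi_1(X) \simeq TAQ(S^X_{K(1)})$, from which the $E$-cohomology of $\Phi_1(X)$ is the module computed above.

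Finally, I would translate this $E^*$-module with its residual $\psi^l$-action into Bousfield's formulas. For any $K(1)$-local spectrum $Y$, one has the $K$-local fiber sequence
\[
Y \;\longrightarrow\; Y^\wedge_{KU} \;\xrightarrow{\psi^l-1}\; Y^\wedge_{KU},
\]
for $l\in \mathbb{Z}_p^\times$ a topological generator, which yields short exact sequences
\[
0 \to \mathrm{coker}(\psi^l-1)\,|_{E^{2m-1}Y} \to \pi_{2m-1}(Y) \to \ker(\psi^l-1)\,|_{E^{2m}Y} \to 0.
\]
Applying this to $Y=\Phi_1(X)$, where on the degree-$2m$ part of $E^*\Phi_1(X) \cong (Q(E^*X)/\mathrm{im}\,\theta)^\vee$ the operator $\psi^l$ acts as multiplication by $l^m$ times the induced Adams action on $Q(E^*X)^\vee/\theta^\vee$, the kernel and cokernel of $\psi^l-1$ are identified with the kernel and cokernel of $\psi^l - l^m$ on the Pontryagin dual $(Q(E^1X)/\mathrm{im}\,\theta)^\#$, i.e.\ with $W^m_1$ and $W^m$ respectively. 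Matching degrees gives Bousfield's formulas.

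The main obstacle is this last step: carefully matching conventions to show that the operation $\theta$ in the sense of Rezk's monad $\mathbb{T}$ agrees (up to the Bott periodicity unit) with Bousfield's $\theta$ built from $\psi^p$, and that the Spanier-Whitehead duality implicit in $TAQ$ intertwines with Pontryagin duality of the resulting profinite abelian groups. These are bookkeeping facts about power operations on $p$-completed $K$-theory.
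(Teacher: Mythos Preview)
Your outline tracks the paper closely---verify the algebraic hypotheses on $E^*X$, invoke Corollary~\ref{E_inftyTAQ}, and then pass through the fiber sequence $S_{K(1)}\to E\xrightarrow{\psi^l-1}E$---but there is a genuine logical gap in the middle.

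The sentence ``Because the Basterra spectral sequence collapses at each finite filtration level and (\ref{P_n(Phi)}) identifies $P_k(\Phi_1)(X)$ with $B(1,\mathcal{F}_{\mathrm{comm}},S_{K(1)}^X)[\leq k]^\vee$, the Goodwillie tower for $\Phi_1(X)$ converges'' does not hold up. Both facts you cite concern the tower $\{P_k(\Phi_1)(X)\}_k$ and its associated graded; neither says anything about the comparison map $\Phi_1(X)\to \varprojlim_k P_k(\Phi_1)(X)$. Collapse of a filtration spectral sequence at each finite stage cannot by itself force convergence of the tower to the unfiltered object---that is precisely the content of $\Phi_1$-goodness, which is what you are trying to prove. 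So as written you have computed $\pi_*TAQ(S^X_{K(1)})$, not $\nu_1^{-1}\pi_*(X)$.

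The paper handles this differently: it does not deduce $\Phi_1$-goodness inside the proof of the theorem at all. The theorem is attributed to Bousfield, and the paper's proof is really a computation of $\pi_*TAQ(S^X_{K(1)})$ showing it yields the same formulas Bousfield obtained by his own methods. Only afterward, in the subsequent corollary, does the paper conclude $\Phi_1$-goodness---by comparing its $TAQ$ answer with Bousfield's independently established value of $\nu_1^{-1}\pi_*(X)$. In other words, the logical flow is the reverse of yours: one needs Bousfield's original computation as external input to certify that the comparison map $c$ is an equivalence. A smaller point: your claim that injectivity of $\theta^\vee$ is ``automatic'' from primitives equalling indecomposables is too quick; the paper instead cites the proofs of Theorems~6.2 and~9.2 of \cite{Bou99} (together with \cite{Lin78} for freeness) to obtain both that $\theta$ preserves the span of the generators and that it is injective there.
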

\begin{proof}
From \cite{Lin78} we know that the conditions on $X$ implies $E^*X$ is a free finitely generated $E^*$-algebra, and free and finitely generated as a $E^*$-module. This implies that $E^*(X_+)=E^*[x_1,\ldots ,x_{d}]$, where $x_i$ is in an odd degree. Further from the proof of Theorem 6.2 in \cite{Bou99} we see that $\theta(x_i)\in E^*\{x_1,\ldots , x_d\}$, and it is injective due to the proof of Theorem 9.2 in \cite{Bou99}.

From Corollary \ref{E_inftyTAQ} the $E_\infty$-page of the TAQ spectral sequence is isomorphic to $Q(E^*(X))^\vee/ \mathrm{Im}d_{p-1}$, and $d_{p-1}:E_p^{*,1}\to E_p^{*,p}$ is given by $(\theta)^\vee$. 

 So $\theta: Q(E^*(X)\to Q(E^*(X)$ can be represented by an upper triangular matrix, and hence the cokernel of $\theta$ is isomorphic to the cokernel of $(\psi^p)^\vee$. So we get that the $E_\infty$-page of the TAQ spectral sequence is isomorphic to $(Q(E^*(X)/\mathrm{Im}(\theta))^\vee$, clearly this is trivial when $*$ is even, and when $*$ is odd isomorphic to $(Q(E^1(X)/\mathrm{im}\psi^p)^\#$.

Recall that we have a fiber sequence $S_{K(1)} \to E \stackrel{\psi^l-1}{\to}E$, which induces long exact sequences 
\begin{align*}
\ldots \to E_{*+1}(X) \to \nu_1^{-1}\pi_*(X) \to E_*(X) \stackrel{\psi^l-1}{\to} \ldots 
\end{align*}
Recall that $\psi^l$ is a ring homomorphism with $\psi^l(u)=l\cdot u$. So 
\begin{align*}
\psi^l-1: E_{2m-\epsilon}(X) \to E_{2m-\epsilon}(X),
\end{align*}
where $\epsilon =0,1$, is, up to a unit, the same as $\psi^l-l^m: E_{-\epsilon} \to E_{-\epsilon}$ under the identification of $E_{2m-\epsilon} \cong E_{-\epsilon}$. This concludes the proof.
\end{proof}

\begin{Cor}
If $X$ is a $1$-connected $H$-space  with $H_*(X;\mathbb{Q})$ associative and with $H_*(X;\mathbb{Z}_{(p)})$ finitely genrated over $\mathbb{Z}_{(p)}$, then $X$ is $\Phi_1$-good in the sense of \cite{BeRe17Survey}.
\end{Cor}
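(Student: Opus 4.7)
The plan is to prove $\Phi_1$-goodness for $X$ by showing that the Behrens--Rezk map $c\colon \Phi(X)\to B(1,\mathcal{F}_{\mathrm{comm}}, S^X_{K(1)})^\vee$ is a weak equivalence. Since both source and target are $K(1)$-local spectra, it is enough to verify that $c$ is an $E$-homology isomorphism. I would compute $E_*$ of each side separately, using the results assembled in the paper, and then check that the agreement is realized by $c$.

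For the target, smashing with $E$ converts $B(1,\mathcal{F}_{\mathrm{comm}}, S^X_{K(1)})^\vee$ to $B(1,\mathcal{F}_{\mathrm{comm}^E}, E^X)^\vee$, whose homotopy is governed by the Basterra spectral sequence of Corollary \ref{BastSS}. By the collapse theorem of Section 5 together with Corollary \ref{E_inftyTAQ} this spectral sequence abuts to $(Q(E^*X)/\mathrm{Im}\,\theta)^\vee$. For the source, $\pi_*\Phi(X) = \nu_1^{-1}\pi_*(X)$ is exactly what was computed in Theorem \ref{BousThm} via the long exact sequence
\begin{align*}
\ldots \to E_{*+1}(X)\to \nu_1^{-1}\pi_*(X)\to E_*(X)\xrightarrow{\psi^l-1}\ldots
\end{align*}
associated to the fiber sequence $S_{K(1)}\to E\xrightarrow{\psi^l-1}E$. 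The identification of $\theta$ with $(\psi^p)^\vee$ on indecomposables carried out in the proof of Theorem \ref{BousThm} shows that the same object $(Q(E^*X)/\mathrm{Im}\,\theta)^\vee$ controls both sides.

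Next I would argue that the coincidence is realized by $c$ itself. This uses the naturality of $c$ with respect to the Goodwillie filtration (equation (\ref{P_n(Phi)})), together with its compatibility with smashing by $E$ and with the fiber sequence $S_{K(1)}\to E\xrightarrow{\psi^l-1}E$. Applying the LES of the fiber sequence to $c$ yields a map of long exact sequences; two of every three consecutive terms agree via $c$ (the $E_*$-terms by the Basterra computation), so by the five lemma $c$ induces an isomorphism on $\pi_*$, hence is a weak equivalence.

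The main obstacle I anticipate is the naturality bookkeeping in the last paragraph: confirming that the identifications of $E_*\Phi(X)$ through Bousfield's LES and of $\pi_*$ of the target through the collapsing Basterra SS are both realized by $c$, not just coincidentally equal. This should reduce to a trace through the construction of $c$ in \cite{BeRe17}, which is built to be compatible with both the Goodwillie tower and base change along $S_{K(1)}\to E$.
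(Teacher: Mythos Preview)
Your overall strategy is the same as the paper's: deduce $\Phi_1$-goodness by comparing Bousfield's independent computation of $\nu_1^{-1}\pi_*(X)$ in \cite{Bou99} with the paper's computation of $\pi_*TAQ(S_{K(1)}^X)^\vee$ via the Kuhn/Goodwillie filtration. The paper's own proof is a single sentence asserting exactly this coincidence; your proposal attempts to make the comparison go through the fiber sequence $S_{K(1)}\to E\to E$ and a five-lemma argument.

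That extra step, however, does not gain anything and is in fact circular. In the map of long exact sequences induced by applying the fiber sequence to $c$, the ``two out of three'' terms you want to match are $E_*\Phi(X)$ and $E_*TAQ(S_{K(1)}^X)^\vee$. The Basterra/TAQ spectral sequence of Corollary~\ref{BastSS} and Section~5 computes only the \emph{target} $E_*TAQ^\vee\cong TAQ^*(E^X)$; it says nothing about $E_*\Phi(X)$, and it certainly does not verify that $E_*(c)$ is an isomorphism. Knowing $E_*(c)$ is an isomorphism is equivalent, by the very same fiber sequence run the other way, to knowing $\pi_*(c)$ is an isomorphism---which is what you set out to prove. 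So the five-lemma reduction is illusory.

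You are right to single out the naturality question (``confirming the identifications are realized by $c$'') as the crux. The paper's proof does not spell this out either; it simply cites that Bousfield's methods give the same answer as the Kuhn-filtration computation and leaves the passage from abstract agreement of finite $p$-groups to ``$c$ is an equivalence'' implicit. If you want to close this gap rigorously, you must either trace Bousfield's argument in \cite{Bou99} against the Goodwillie tower to see that his identification is literally induced by $c$, or give a separate finiteness/counting argument. The fiber-sequence route does not do this for you.
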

\begin{proof}
The methods presented in \cite{Bou99} allows us to compute $\nu_1^{-1}\pi_*X$, and the answer coincides with the answer obtained from computing $\pi_*TAQ(S_{K}^X)^\vee$ by means of the Kuhn filtration.
\end{proof}

\begin{Rem}
Note that one can easily check that all the differentials in the $TAQ$ spectral sequence from Theorem \ref{BousThm} commute with the map 
\begin{align*}
E_*(\ ) \stackrel{\psi^l-1}{\to} E_*(\ ).
\end{align*}
This allows us to get a complete description of the $\nu_1^{-1}\pi_*$ based Goodwillie spectral sequence for a $1$-connected $H$-space.

One could hope that this would allow us to give differentials in the unlocalized Goodwillie spectral sequence for the same space.

A different strategy for computing $E_*TAQ(S^X_{K(1)})$ would be computing $AQ^*_\mathbb{T}(E^*X)$, and then run the Basterra spectral sequence from \ref{BastSS}. In the case of $X$ being as in Theorem \ref{BousThm} this would have given the same answer, but not revealed anything about the Goodwillie spectral sequence.
\end{Rem}

\bibliography{bibleo}
\bibliographystyle{amsalpha}
\end{document}